\newcommand{\NN}{\mathbb{N}}
\newcommand{\CC}{\mathbb{C}}
\newcommand{\ZZ}{\mathbb{Z}}
\newcommand{\I}{\mathcal{I}}
\newcommand{\p}{\mathcal{P}}
\newcommand{\DX}{\widetilde{\mathcal{D}}_\X}
\newcommand{\tX}{\widetilde{\X}}
\newcommand{\A}{\mathfrak{a}}
\newcommand{\X}{\mathsf{X}}
\newcommand{\U}{\mathcal{U}}
\newcommand{\s}{\mathcal{S}}
\newcommand{\OX}{\mathcal{O}_\X}
\newcommand{\Ef}{\widehat E_0}
\newcommand{\Eu}{\widehat E_\infty}
\newcommand{\Et}{\widehat E_{\text{tight}}}
\newcommand{\Ct}{C^*_{\text{tight}}}
\newcommand{\fs}{\subset_{\text{fin}}}
\newcommand{\kl}[2]{_{#1}{\sim}_{#2}}
\newcommand{\klclass}[3]{_{#1}\hspace{-0.03cm}#2_{#3}}
\theoremstyle{definition}
\newtheorem{theo}{Theorem}[section]
\newtheorem{rmk}[theo]{Remark}
\newtheorem{lem}[theo]{Lemma}
\newtheorem{defn}[theo]{Definition}
\newtheorem{cor}[theo]{Corollary}
\newtheorem{prop}[theo]{Proposition}
\date{}
\begin{document}
\title{Inverse semigroups associated to subshifts}
\author{Charles Starling\thanks{Supported by the NSERC grants of Beno\^it Collins, Thierry Giordano, and Vladimir Pestov. \texttt{cstar050@uottawa.ca}.}}
\maketitle

\begin{abstract}
The dynamics of a one-sided subshift $\mathsf{X}$ can be modeled by a set of partially defined bijections. From this data we define an inverse semigroup $\mathcal{S}_{\mathsf{X}}$ and show that it has many interesting properties. We prove that the Carlsen-Matsumoto C*-algebra $\mathcal{O}_\mathsf{X}$ associated to $\mathsf{X}$ is canonically isomorphic to Exel's tight C*-algebra of $\mathcal{S}_{\mathsf{X}}$. As one consequence, we obtain that $\mathcal{O}_\mathsf{X}$ can be written as a partial crossed product of a commutative C*-algebra by a countable group.
\end{abstract}

{\bf Author's note:} The results in this paper are not correct as stated. They become correct if one assumes that $\X$ satisfies Matsumoto's condition (I). A corrigendum has been submitted to J. Algebra, and has been appended to the end of this arXiv entry, before the references. 

\section{Introduction}
An {\em inverse semigroup} is a semigroup $S$ together with an involution $*: S\to S$ such that for all $s\in S$ we have 
\[
ss^*s = s.
\]
and such that $s^*$ is the only element for which this equation holds. On the other hand, a {\em partial isometry} in a C*-algebra $A$ is an element $v$ such that
\[
vv^*v = v.
\]
The link between inverse semigroups and partial isometries in C*-algebras implied by the above is hard to ignore, especially considering the mass of important examples of C*-algebras which are generated by partial isometries. In fact, for many C*-algebras of interest one can choose a countable generating set consisting of partial isometries which is also closed under product and adjoint -- such a set is necessarily an inverse semigroup.

On the one hand, C*-algebras have provided interesting examples of inverse semigroups. For examples of this, we can look to the graph inverse semigroups of \cite{Pa02} and \cite{La14}, the tiling inverse semigroup of \cite{KelCoin} and the AF inverse semigroups of \cite{LS14}. As one can imagine, each of these appears as a generating set of partial isometries in its namesake C*-algebra. 

On the other hand, if one knows a certain C*-algebra $A$ is generated by an inverse semigroup $S$, then (semigroup-theoretical) properties of $S$ give rise to properties of $A$. The catch is that $S$ may not tell the whole story. For example, the Cuntz algebra $\mathcal{O}_2$ and its Toeplitz extension $\mathcal{T}_2$ are both generated by inverse semigroups of partial isometries, and both of these inverse semigroups are isomorphic (as semigroups) to the same inverse semigroup $P_2$ (called a {\em polycyclic monoid} in the literature). What is happening is that there is a {\em representation} of $P_2$ in both algebras, but the relation $s_1s_1^* + s_2s_2^* = 1$ which holds in the Cuntz algebra cannot be expressed using only the multiplication and involution inside the inverse semigroup. Therefore, if one hopes to phrase simplicity (for example) of a C*-algebra in terms of properties of a generating inverse semigroup, one has to care for how the inverse semigroup is represented in the C*-algebra.\footnote{It turns out that another way of approaching this problem is to specialize further to a class of inverse semigroups called {\em boolean inverse monoids}. We do not pursue this here, and the interested reader is directed to \cite{LL13} for more details.}

This led Exel in \cite{Ex08} to define the notion of a {\em tight} representation of an inverse semigroup. He showed that for an inverse semigroup $S$ there always exists a C*-algebra, called the {\em tight} C*-algebra of $S$ and denoted $\Ct(S)$, which is universal for tight representations of $S$. It turns out that $\mathcal{O}_2$ is universal for tight representations of $P_2$, and $\mathcal{T}_2$ is universal for {\em all} representations of $P_2$ (a concept introduced by Paterson in \cite{Pa99}). Many C*-algebras of interest are isomorphic to the tight C*-algebra of their generating sets -- for example see \cite{EGS12} for tiling C*-algebras, \cite{Ex08} for graph and higher rank graph C*-algebras, \cite{StLCM} for boundary quotients of certain Cuntz-Li algebras, and \cite{EP17} for Katsura algebras and self-similar group algebras. 

Hence, there has been interest in relating the properties of an inverse semigroup to properties of its tight C*-algebra. The paper \cite{EP16} of Exel and Pardo provides conditions on $S$ which guarantee that $\Ct(S)$ is simple, and in the case that $\Ct(S)$ is nuclear these conditions become necessary and sufficient (invoking the results of \cite{BCFS14} regarding an underlying groupoid). They also give a condition which further guarantees that $\Ct(S)$ is purely infinite. Similar results are obtained in \cite{Ste16}.  Work of Milan and Steinberg \cite{MS14} gives conditions on $S$ which imply that $\Ct(S)$ is isomorphic to the partial crossed product of a commutative C*-algebra by a group, and further conditions which imply it is Morita equivalent to a usual crossed product.

With that setup, we turn to the present paper. We are concerned with one-sided subshifts over a finite alphabet $\A$, ie closed subspaces of $\A^\NN$ which are invariant under the left shift map. In \cite{Ma97}, Matsumoto associated a C*-algebra to such a space $\X$ which generalized the construction of Cuntz-Krieger C*-algebras \cite{CK80} (which can be naturally viewed as C*-algebras associated to shifts of finite type). In a subsequent paper with Carlsen \cite{CM04} a slightly different construction was put forward, and again in \cite{Ca08}. We will deal with the C*-algebra $\OX$ defined in \cite{Ca08}, and call this a {\em Carlsen-Matsumoto algebra}. This C*-algebra has been viewed under the lens of many different constructions: in \cite{Ca08} it is presented as a Cuntz-Pimsner algebra, in \cite{CS07} it is obtained as an Exel crossed product by an endomorphism, in \cite{CarlsenThesis} it is obtained from a Renault-Deaconu groupoid, and in \cite{Th10a} Thomsen constructs it from a semi-\'etale groupoid. 

Here, we add another construction to the list, by constructing an inverse semigroup $\s_\X$ from $\X$ and showing that $\OX$ is isomorphic to $\Ct(\s_\X)$. Our motivations for doing so are to add another example of a C*-algebra which can be seen as the tight C*-algebra of an inverse semigroup and also to provide another interesting example of an inverse semigroup arising from a C*-algebra. The other reason mentioned above that one might want to embark on such an investigation -- that properties of the algebra can be gleaned from that of the inverse semigroup -- is less pressing in this case, as Carlsen-Matsumoto algebras are already quite well-studied. For instance, the papers \cite{Th10a} and  \cite{CT12} combine to provide sharp conditions under which $\OX$ is simple and purely infinite. We do however use the results of \cite{MS14} to show that $\OX$ can be seen as a partial crossed product of a commutative C*-algebra by the free group over $\A$.

This paper is organized in the following manner. After providing some background, in Section 3 we define our inverse semigroup $\s_\X$ from $\X$, and show that it satisfies some nice properties. Section 4 is first devoted to establishing the isomorphism between $\OX$ and $\Ct(\s_\X)$, and then finishes by mentioning the partial crossed product result mentioned above.

\section{Preliminaries and notation}
We will use the following general notation. If $X$ is a set and $U\subset X$, let Id$_U$ denote the map from $U$ to $U$ which fixes every point, and let $1_{U}$ denote the characteristic function on $U$, ie $1_U: X\to \CC$ defined by $1_U(x) = 1$ if $x\in U$ and $1_U(x) = 0$ if $x\notin U$. If $F$ is a finite subset of $X$, we write $F\fs X$. We let $\NN$ denote the set of natural numbers (starting at 1).
\subsection{Inverse semigroups}\label{ISGsubsection}

An {\em inverse semigroup} is a semigroup $S$ such that for every $s\in S$, there exists a unique element $s^*\in S$, with the property that 
\[
ss^*s = s, \hspace{1cm}s^*ss^* = s^*.
\]
The element $s^*$ is called the {\em inverse} of $S$. For $s,t\in S$, we have $(s^*)^* = s$ and $(st)^* = t^*s^*$. We typically assume that $S$ has a neutral element $1$ and a zero element 0 such that
\[
1s = s1 = s\text{ for all }s\in S
\]
\[
0s = s0 = 0\text{ for all }s\in S.
\]
Even though we call $s^*$ the inverse of $s$, we need not have $ss^* = 1$, although we always have that $(ss^*)^2 = ss^*ss^* = ss^*$, which is to say that $ss^*$ (and indeed $s^*s$) is an {\em idempotent}. The set of all idempotents in $S$ is denoted
\[
E(S) = \{e\in S\mid e^2 = e\}.
\]
It is a nontrivial fact that if $S$ is an inverse semigroup, then $E(S)$ is closed under multiplication and commutative. It is also clear that if $e\in E(S)$, then $e^* = e$.

Let $X$ be a set, and let
\[
\I(X) = \{f:U\to V\mid U, V\subset X, f\text{ bijective}\}.
\]
Then $\I(X)$ is an inverse semigroup when given the operation of composition on the largest possible domain and inverse given by function inverse; it is called the {\em symmetric inverse monoid on $X$}.  If $e$ is an idempotent in $\I(X)$, then $e =$ Id$_U$ for some $U\subset X$. The function Id$_X$ is the neutral element for $\I(X)$, and the empty function is the 0 element for $\I(X)$. It is an important fact (akin to the Cayley theorem for groups) that every inverse semigroup is embeddable in $\I(X)$ for some set $X$ -- this is the Wagner-Preston theorem.

Every inverse semigroup possesses a natural order structure. For an inverse semigroup $s,t\in S$ we say $s\leqslant t$ if and only if $ts^*s = s$. On idempotents, this order has a nicer form -- if $e, f\in E(S)$ then $e\leqslant f$ if and only if $ef = e$. This partial order is perhaps best understood for elements of $\I(X)$, because if $g,h\in \I(X)$, then $g\leqslant h$ if and only if $h$ extends $g$ as a function.
\subsection{Subshifts}
As much as possible we use notation set in \cite{Ca08, CS07}. Let $\A$ be a finite set, called the {\em alphabet}, and endow it with the discrete topology. The product space
\[
\A^\NN = \prod_{n\in \NN}\A
\]
is called the {\em one-sided full shift over $\A$}. If $x=(x_n)_{n\in\NN}$, we will write $x$ in the shorter form
\[
x = x_1x_2x_3\cdots.
\]
The map $\sigma:\A^\NN\to \A^\NN$ given by $\sigma(x_1x_2x_3\cdots) = x_2x_3\cdots$ is called the {\em shift}, and is a continuous surjection. A subspace $\X\subset \A^\NN$ is called a {\em subshift} if it is closed and $\sigma(\X) \subset \X$. If this is the case, we will also sometimes say that $\X$ is a {\em one-sided subshift over $\A$}. Since $\A^\NN$ is compact and metrizable, then so is any subshift over $\A$.

For an integer $k\geq 1$, we let $\A^k$ denote the set of words of length $k$ in elements of $\A$ -- we again write an element $w\in \A^k$ as $w_1w_2\cdots w_k$. We also let $\A^0 = \{\epsilon\}$, and call $\epsilon$ the {\em empty word}. For $w\in\A^k$ we write $|w| =k$ and say that the {\em length} of $w$ is $k$.  We set $\A^* = \cup_{k\geq 0}\A^k$ and say this is the set of words in $\A$. Given $v,w\in\A^*$, we may form their concatenation
\[
vw = v_1v_2\cdots v_{|v|}w_1w_2\cdots w_{|w|}\in\A^*.
\]
In addition, for all $v\in \A^*$, we take $v\epsilon = \epsilon v = v$. Given $v\in \A^*$ and $x\in \A^\NN$, we may also concatenate $v$ and $x$:
\[
vx = v_1v_2\cdots v_{|v|}x_1x_2\cdots\in \A^\NN.
\]
Again, for all $x\in \A^\NN$ we let $\epsilon x = x$. If $v\in \A^*$, $x\in \A^*\cup \A^\NN$ and $y = vx$, then we say that $v$ is a {\em prefix} of $y$. For $x\in \X$ and $k\in\NN$ we let
\[
x_{[1,k]} = x_1x_2\cdots x_k,\hspace{1cm} x_{(k,\infty)} = x_{k+1}x_{k+2}\cdots.
\]
In addition, if $F\subset \A^*$ and $w\in \A^*$ we let $Fw = \{fw\mid f\in F\}$ and $wF = \{wf\mid f\in F\}$.

For $v\in\A^*$, we let $C(v) = \{vx\in\A^\NN\mid x\in \A^\NN\}$ and call sets of this form {\em cylinder sets}. These sets are closed and open in $\A^\NN$, and generate the topology on $\A^\NN$.  If $\X$ is a one-sided subshift over $\A$ and $v\in \A^*$, then we set $C_\X(v) = C(v)\cap \X$, although the subscript will frequently be dropped.

\section{Inverse semigroups associated to subshifts}
Given a one-sided subshift $\X$, the shift map $\sigma:\X\to \X$ is continuous, but in general it is not a local homeomorphism. Still, it is locally a {\em bijection}, in that $\left.\sigma\right|_{C(a)}$ is a bijection for all $a\in \A$. As mentioned in Section \ref{ISGsubsection}, inverse semigroups are a natural object with which to study partially defined bijections, and so we use the partial bijections above to associate an inverse semigroup $\s_\X$ to $\X$.
\subsection{Construction of $\s_\X$}
Following \cite{Ca08}, for $\mu, \nu\in \A^*$, we let
\[
C(\mu, \nu) = \{ \nu x \in \X\mid \mu x\in \X\}
\]
and notice that $C(\mu, \mu)= C(\mu)$. We note that
\[
C(\mu, \nu) = C(\nu)\cap\sigma^{-|\nu|}(\sigma^{|\mu|}(C(\mu))).
\]
Since the shift map is a closed map, $C(\mu, \nu)$ is closed for every $\mu, \nu\in\A^*$.

For each $a\in \A$, let $s_a\in \I(\X)$ be defined by 
\[
s_a: C(a, \epsilon) \to C(a, a)
\]
\[
s_a(x) = ax.
\]
For $\mu\in \A^*\setminus\{\epsilon\}$, we define $s_\mu = s_{\mu_1}s_{\mu_2}\cdots s_{\mu_{|\mu|}}$ so that
\[
s_\mu: C(\mu, \epsilon) \to C(\mu)
\]
\[
s_\mu(x) = \mu x.
\]
For the empty word $\epsilon$, we take $s_\epsilon =$ Id$_\X$. It is clear that for each $\mu\in\A^*$, the map $s_\mu$ is a bijection between subsets of $\X$.
\begin{defn}
Let $\X$ be a one-sided subshift over $\A$. Then we let $\s_\X$ be the inverse semigroup generated by $\{s_\epsilon, s_a\mid a\in \A\}$ inside $\I(\X)$.
\end{defn}

We would like to find a convenient closed form for elements of $\s_\X$. To this end, for each $F\fs \A^*$ and $\nu\in \A^*$, let\footnote{Late in preparation for this work, we discovered that sets of this form were already considered in \cite{Th10a}, and were written $C'(\nu;F)$. We use our notation in solidarity with \cite{Ca08} and keep the ``prefix'' data in the second entry and the ``possible replacement prefixes'' data in the first entry.}
\begin{eqnarray*}
C(F; \nu) &=& \{\nu x\in \X\mid fx\in \X\text{ for all }f\in F\}\\
          &=& \bigcap_{f\in F}C(f, \nu),
\end{eqnarray*}
and let $E(F;\nu) =$ Id$_{C(F;\nu)}$. We will also let $E(\mu,\nu) =$ Id$_{C(\mu, \nu)}$. A short calculation shows that
\[
E(\mu, \nu) = s_\nu s^*_\mu s_\mu s^*_\nu
\]
\[
E(F;\nu) = s_\nu \left(\prod_{f\in F}s^*_f s_f \right) s^*_\nu.
\]
The collection of all such elements will be important in the sequel -- we use the notation
\begin{equation}\label{EXdef}
E_\X = \{E(F;v)\in \I(\X)\mid v\in \A^*,  F\subset_{\text{fin}}\A^*\}\cup\{\emptyset\}.
\end{equation}
We note that the identity function on $\X$ is an element of $E(\X)$, taking $E(F;v)$ with $v = \epsilon$ and $F = \{\epsilon\}$. We also note that $E(F;v)E(G;w)\neq 0$ if and only if $C(F;v)\cap C(G;w)\neq \emptyset$.
\begin{lem}\label{idempotentlemma}
If $\X$ is a one-sided subshift over $\A$, then the set $E_\X$ is closed under multiplication. Furthermore, if $w\in \A^*$ and $e\in E_{X}$, then $s_w^*es_w\in E_\X$.
\end{lem}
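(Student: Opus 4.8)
The plan is to argue entirely at the level of the partial bijections in $\I(\X)$, exploiting the fact that each nonzero element of $E_\X$ is the identity map $\mathrm{Id}_{C(F;v)}$ on a closed set of the special form $C(F;v)=\bigcap_{f\in F}C(f,v)$. Since in $\I(\X)$ one has $\mathrm{Id}_U\,\mathrm{Id}_V=\mathrm{Id}_{U\cap V}$, closure of $E_\X$ under multiplication reduces to showing that the collection of sets $C(F;v)$ (with the empty set adjoined) is closed under finite intersection. For the ``furthermore'' part I would first compute the composite $s_w^*\,\mathrm{Id}_{C(F;v)}\,s_w$ on its largest domain: since $\mathrm{dom}(s_w)=C(w,\epsilon)$, $s_w(x)=wx$, and $C(F;v)\subseteq\X$ while $\X$ is a subshift (so $wx\in\X$ already forces $x=\sigma^{|w|}(wx)\in\X$), this composite is precisely $\mathrm{Id}_D$ with $D=\{x\in\X\mid wx\in C(F;v)\}$. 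Thus the ``furthermore'' part reduces to showing that every such ``backward translate'' $D$ is again of the form $C(H;u)$ for some finite $H$ and some $u\in\A^*$ (or empty).

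Both reductions are handled by a short case analysis on how the relevant prefixes overlap. For the intersection: any point of $C(F;v)\cap C(G;w)$ has both $v$ and $w$ as a prefix, so unless one of $v,w$ is a prefix of the other the intersection is empty; assuming without loss of generality $w=vw'$, writing a typical element as $wx=v(w'x)$ turns membership in $C(F;v)$ into $f(w'x)\in\X$ for all $f\in F$ and membership in $C(G;w)$ into $gx\in\X$ for all $g\in G$, so $C(F;v)\cap C(G;w)=C(Fw'\cup G;\,w)$, and $Fw'\cup G$ is again a finite subset of $\A^*$. For the backward translate $D$: if $v$ is a prefix of $w$, say $w=vw'$, then $wx=v(w'x)$ and $wx\in C(F;v)$ amounts to $wx\in\X$ together with $f(w'x)\in\X$ for all $f\in F$, so $D=C(\{w\}\cup Fw';\,\epsilon)$; if instead $w$ is a prefix of $v$, say $v=wv'$, then $wx\in C(F;v)$ forces $x$ to have prefix $v'$, say $x=v'y$, and reduces to $vy\in\X$ together with $fy\in\X$ for all $f\in F$, so $D=C(\{v\}\cup F;\,v')$; and if neither $v$ nor $w$ is a prefix of the other, no $x$ makes $wx$ have prefix $v$, so $D=\emptyset$. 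In every case the output lies in $E_\X$.

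I expect the main obstacle to be not any single hard step but the discipline of the prefix bookkeeping together with the degenerate cases: handling $v=\epsilon$ and $F=\{\epsilon\}$ (where $E(F;v)$ is the identity of $\X$), observing that the various sets $C(F;v)$ may be empty (in which case $E(F;v)=\emptyset$, harmlessly), and noting that $Fw'$ may introduce the empty word or repetitions but is still finite. A secondary point to be careful about is the claim that $s_w^*\,\mathrm{Id}_{C(F;v)}\,s_w$ is genuinely computed on the largest domain asserted -- that is, that the domain constraint $x\in C(w,\epsilon)$ coming from $s_w$ is automatically absorbed once one imposes $wx\in C(F;v)\subseteq\X$. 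Once these checks are in place the lemma follows. (One could alternatively push everything through the identities $E(F;\nu)=s_\nu(\prod_{f\in F}s_f^*s_f)s_\nu^*$, but the set-theoretic computation above seems cleaner.)
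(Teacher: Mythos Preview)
Your proof is correct, but it takes a genuinely different route from the paper's. The paper works entirely algebraically inside the inverse semigroup, starting from the formula $E(F;\nu)=s_\nu\bigl(\prod_{f\in F}s_f^*s_f\bigr)s_\nu^*$ and manipulating products using the relations $s_v^*s_w$ (prefix comparison), commutation of idempotents, and the identity $s_z^*s_f^*s_fs_z=s_{fz}^*s_{fz}$; this yields exactly the same output formulas you obtained, e.g.\ $E(F;v)E(G;w)=E(G\cup Fz;w)$ when $w=vz$ and $s_w^*E(F;v)s_w=E(\{w\}\cup Fz;\epsilon)$ when $w=vz$. Your approach instead stays at the level of sets in $\I(\X)$, reducing closure under products to the intersection formula $C(F;v)\cap C(G;w)=C(Fw'\cup G;w)$ and the ``furthermore'' statement to computing the backward translate $\{x\in\X\mid wx\in C(F;v)\}$. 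The set-theoretic version is arguably cleaner and makes the prefix bookkeeping transparent; the paper's algebraic version has the advantage that the same symbol-pushing goes through verbatim in any $*$-algebra where the generators satisfy the relevant relations, which is exactly what is needed later (Proposition~\ref{SXOXtight}) to verify that $\pi:\s_\X\to\OX$ is a representation.
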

\begin{proof}
Suppose that $F, G\fs\A^*$ and that $v, w\in \A^*$. Then
\begin{equation}\label{idempotentproduct}
E(F;v)E(G;w) = s_v \left(\prod_{f\in F}s^*_f s_f \right) s^*_vs_w \left(\prod_{g\in G}s^*_g s_g \right) s^*_w
\end{equation}
This product will be 0 unless $v$ is a prefix of $w$ or vice-versa. If $w = vz$, then
\begin{eqnarray*}
E(F;v)E(G;w) & = & s_v \left(\prod_{f\in F}s^*_f s_f \right) s^*_vs_vs_z \left(\prod_{g\in G}s^*_g s_g \right) s^*_{vz}\\
             & = & s_vs_z s^*_z\left(\prod_{f\in F}s^*_f s_f \right) s_zs^*_zs^*_vs_vs_z \left(\prod_{g\in G}s^*_g s_g \right) s^*_{vz}\\
             & = & s_w\left(\prod_{f\in F}s^*_zs^*_f s_fs_z \right)s^*_ws_w \left(\prod_{g\in G}s^*_g s_g \right)s^*_w\\
             & = & s_w \left(\prod_{f\in F}s^*_{fz} s_{fz} \right)\left(\prod_{g\in G}s^*_g s_g \right)s_w^*\\
             & = & E(G\cup Fz; w).
\end{eqnarray*}
A similar calculation shows that if $v = wz$, then
\[
E(F;v)E(G;w) = E(F\cup Gz; v).
\]

Furthermore,
\[
s_w^*E(F;v)s_w = s_w^*s_v\left(\prod_{f\in F}s^*_f s_f \right) s^*_v s_w.
\]
This is zero unless $v$ is a prefix of $w$ or vice-versa. If $w = vz$, then 
\begin{eqnarray*}
s_w^*E(F;v)s_w & = & s_w^*s_v\left(\prod_{f\in F}s^*_f s_f \right) s^*_v s_w\\
               & = & s_z^*s_v^*s_v\left(\prod_{f\in F}s^*_f s_f \right)s^*_vs_vs_z\\
               & = & s_z^*\left(\prod_{f\in F}s^*_f s_f \right)s_v^*s_vs_z\\
               & = & \left(\prod_{f\in F}s_z^*s^*_f s_fs_z \right)s_z^*s_v^*s_vs_z\\
               & = & E(\{w\}\cup Fz; \epsilon)
\end{eqnarray*}
If $v = wz$, then
\begin{eqnarray*}
s_w^*E(F;v)s_w & = & s_w^*s_ws_z\left(\prod_{f\in F}s^*_f s_f \right) s^*_zs^*_w s_w\\
               & = & s_z\left(\prod_{f\in F}s^*_f s_f \right)s_z^* s^*_ws_w\\
               & = & E(F;z)E(w, \epsilon)\\
               & = & E(F\cup \{wz\}; z)
\end{eqnarray*}
where the last line is by our previous calculation.
\end{proof}

\begin{prop}\label{SXprop}
Let $\X$ be a one-sided subshift over $\A$. Then
\begin{equation}\label{SX}
\s_{\X} = \{ s_\alpha E(F; v)s^*_\beta\in \I(\X)\mid \alpha, \beta, v\in \A^*, F\subset_{\text{fin}} \A^*\}\cup\{0\}.
\end{equation}
\end{prop}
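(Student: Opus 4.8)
The plan is to verify that the set on the right-hand side of \eqref{SX}, call it $T$, is an inverse subsemigroup of $\I(\X)$ that contains the generators $\{s_\epsilon,s_a\mid a\in\A\}$ and is contained in $\s_\X$; since $\s_\X$ is by definition the smallest inverse subsemigroup of $\I(\X)$ containing these generators, this forces $T=\s_\X$.

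Two of the ingredients are quick. For $T\subseteq\s_\X$: each $s_\alpha$, $s_\beta$ is a finite product of generators, and $E(F;v)=s_v\big(\prod_{f\in F}s_f^*s_f\big)s_v^*$ is a finite product of generators and inverses ($F$ being finite), so $s_\alpha E(F;v)s_\beta^*\in\s_\X$; and $0\in\s_\X$, e.g.\ $0=s_a^*s_b$ for distinct letters $a,b$ since $C(a)\cap C(b)=\emptyset$. For the reverse membership of the generators: $s_a=s_aE(\{\epsilon\};\epsilon)s_\epsilon^*$ and $s_\epsilon=s_\epsilon E(\{\epsilon\};\epsilon)s_\epsilon^*$, using $E(\{\epsilon\};\epsilon)=\mathrm{Id}_\X$, and $0\in T$ by definition. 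Closure of $T$ under $*$ is immediate since idempotents are self-adjoint: $(s_\alpha E(F;v)s_\beta^*)^*=s_\beta E(F;v)s_\alpha^*\in T$, and $0^*=0$. A $*$-closed subsemigroup of an inverse semigroup is automatically an inverse subsemigroup, so the only substantial point is closure of $T$ under multiplication.

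So the heart of the proof is to show $(s_\alpha E(F;v)s_\beta^*)(s_\gamma E(G;w)s_\delta^*)$ is again of the form $s_{\alpha'}E(F';v')s_\delta^*$, or is $0$. The decisive subexpression is $s_\beta^*s_\gamma$: comparing the ranges of $s_\gamma$ and $s_\beta$, namely the cylinder sets $C(\gamma)$ and $C(\beta)$, this is $0$ unless one of $\beta,\gamma$ is a prefix of the other (and if it is $0$, the whole product is $0\in T$). When $\gamma=\beta z$ we have $s_\gamma=s_\beta s_z$, hence $s_\beta^*s_\gamma=(s_\beta^*s_\beta)s_z=E(\{\beta\};\epsilon)s_z$, and the product becomes $s_\alpha\,e_1\,s_z\,E(G;w)\,s_\delta^*$ with $e_1=E(F;v)E(\{\beta\};\epsilon)\in E_\X$ by Lemma \ref{idempotentlemma}. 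Using the standard identity $e_1s_z=s_z(s_z^*e_1s_z)$ to move $s_z$ past $e_1$, then invoking Lemma \ref{idempotentlemma} again — both that $s_z^*e_1s_z\in E_\X$ and that $E_\X$ is closed under products — the product collapses to $s_{\alpha z}\,e_2\,s_\delta^*$ with $e_2=(s_z^*e_1s_z)E(G;w)\in E_\X$, which has the claimed form (or is $0$ if $e_2=0$). The remaining case $\beta=\gamma z$ is handled by taking adjoints: $\big[(s_\alpha E(F;v)s_\beta^*)(s_\gamma E(G;w)s_\delta^*)\big]^*=(s_\delta E(G;w)s_\gamma^*)(s_\beta E(F;v)s_\alpha^*)$, whose middle subexpression $s_\gamma^*s_\beta$ now has $\beta=\gamma z$ in the orientation treated above, so this adjoint lies in $T$, and therefore so does the original since $T=T^*$.

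The main obstacle is precisely this multiplication computation — seeing how the \emph{prefix} data and \emph{replacement-prefix} data recombine when an $s_\beta^*$ meets an $s_\gamma$. It is Lemma \ref{idempotentlemma} in both its guises (the multiplicative closure of $E_\X$, and its stability under the conjugations $e\mapsto s_w^*es_w$), together with the elementary inverse-semigroup identity $et=t(t^*et)$ for idempotent $e$, that makes the bookkeeping terminate in the normal form \eqref{SX}. Everything else is routine.
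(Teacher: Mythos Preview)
Your proof is correct and follows essentially the same approach as the paper's: both reduce the question to closure of the right-hand side under multiplication, case-split on whether $\beta$ is a prefix of the other element's leading word or vice versa, and in the nontrivial case push the intervening $s_z$ past the idempotent via the identity $es_z=s_z(s_z^*es_z)$ before invoking Lemma~\ref{idempotentlemma}. Your treatment is slightly more explicit (you name the identity $et=t(t^*et)$, verify $0\in\s_\X$, and dispatch the symmetric case by taking adjoints rather than saying ``similar''), but the substance is the same.
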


\begin{proof}
We note that the containment ``$\supseteq$'' is trivial, because each element of the right hand side is a finite product of elements from $\{s_\epsilon, s_a\mid a\in \A\}$. Hence, we will be done if we can show that the set on the right hand side of \eqref{SX} is itself an inverse semigroup, because the right hand side contains $\{s_\epsilon, s_a\mid a\in \A\}$, and $\s_\X$ is the smallest inverse semigroup containing these elements.

It is clear that \eqref{SX} is closed under inverses, so we only need to show that it is closed under multiplication. To this end, take $\alpha, \beta, \delta, \eta, v, w\in\A^*$ and $F, G\subset_{\text{fin}}\A^*$. The product $\left(s_\alpha E(F; v)s^*_\beta\right)(s_\delta E(G; w)s_\eta^*)$ will again only be nonzero if $\beta$ is a prefix of $\delta$ or vice-versa. If $\delta = \beta\gamma$, then 
\begin{eqnarray*}
(s_\alpha E(F; v)s^*_\beta)(s_\delta E(G, w)s_\eta^*) 
&=&s_\alpha E(F; v)s^*_\beta s_\beta s_\gamma E(G, w)s_\eta^*\\
&=&s_\alpha s_\gamma s_\gamma^* E(F\cup\{\beta v\}; v) s_\gamma E(G, w)s_\eta^*
\end{eqnarray*}
and so by Lemma \ref{idempotentlemma}, this product is in $\s_\X$. A similar argument applies to the case that $\beta = \delta\gamma$. Hence $\s_\X$ is an inverse semigroup and we are done.
\end{proof}

\begin{rmk}\label{rmk:incorrectrmk}
For $s_\alpha E(F; v)s^*_\beta\in \s_\X$, if it happens that $\alpha_{|\alpha|}= \beta_{|\beta|} = a\in\A$, then 
\[
s_\alpha E(F; v)s^*_\beta = s_{\alpha_1\cdots\alpha_{|\alpha|-1}} E(F; av)s^*_{\beta_1\cdots\beta_{|\alpha|-1}}.
\]
For this reason, when working with elements of $\s_\X$ we will usually assume they are in ``lowest terms''. To be more precise, we take the following form for $\s_\X$:
\begin{equation}\label{SXdef}
\s_\X = \{s_\alpha E(F; v)s^*_\beta \mid \alpha, \beta, v\in \A^*, F\subset_{\text{fin}} \A^*, \alpha_{|\alpha|}\neq \beta_{|\beta|}\}\cup\{0\}.
\end{equation}
If we take two such elements $s_\alpha E(F; v)s^*_\beta$ and $s_\delta E(G; w)s_\eta^*$ with $\alpha_{|\alpha|}\neq \beta_{|\beta|}$ and $\delta_{|\delta|}\neq \eta_{|\eta|}$, then 
\[
s_\alpha E(F; v)s^*_\beta = s_\delta E(G; w)s_\eta^* \Rightarrow \alpha = \delta, \beta = \eta.
\]
We caution that the above equality does not imply that $E(F; v) = E(G; w)$, Indeed, a short calculation shows that 
\[
s_\alpha E(F; v)s^*_\beta = s_\alpha E(F\cup\{\alpha v, \beta v\}; v)s^*_\beta.
\]
We could put a condition on $\s_\X$ similar to \eqref{SXdef} stating that we assume $F$ contains $\alpha v$ and $\beta v$ when writing $s_\alpha E(F; v)s^*_\beta$, but this will usually not be necessary.
\end{rmk}
In the proof of Proposition \ref{SXprop} we started computation of the product of two elements of $\s_\X$, but stopped when it became clear that the product was again back in $\s_\X$. In the following lemma, we record the details of the exact form of this product.
\begin{lem}\label{productcomputation}
Let $\X$ be a one-sided subshift over $\A$, and take $\alpha, \beta, \delta, \eta, v, w\in\A^*$ and $F, G\subset_{\text{fin}}\A^*$.
\begin{enumerate}
\item If $\delta = \beta\gamma$ and $\gamma = vz$ for some $\gamma, z\in\A^*$, then
\begin{equation}
(s_\alpha E(F; v)s^*_\beta)(s_\delta E(G; w)s_\eta^*) = s_{\alpha\gamma} E(Fzw\cup G \cup\{\gamma w \}\cup \{\delta w\} ; w)s^*_\eta
\end{equation}
\item If $\delta = \beta\gamma$, $v = \gamma z$, and $z = wr$ for some $\gamma, z, r\in\A^*$, then
\begin{equation}
(s_\alpha E(F; v)s^*_\beta)(s_\delta E(G; w)s_\eta^*) = s_{\alpha\gamma} E(F\cup Gr \cup\{\beta v\}; z) s^*_\eta
\end{equation}
\item If $\delta = \beta\gamma$, $v = \gamma z$, and $w = zr$ for some $\gamma, z, r\in\A^*$, then
\begin{equation}
(s_\alpha E(F; v)s^*_\beta)(s_\delta E(G; w)s_\eta^*) = s_{\alpha\gamma} E(Fr\cup G \cup\{\beta vr\}; w) s^*_\eta
\end{equation}
\item If $\beta = \delta\gamma$ and $\gamma = wz$ for some $\gamma, z\in \A^*$ then
\begin{equation}
(s_\alpha E(F; v)s^*_\beta)(s_\delta E(G; w)s_\eta^*) = s_{\alpha} E(F\cup Gzv \cup\{\gamma v\}\cup\{\beta v\}; v) s^*_{\eta\gamma}
\end{equation}
\item If $\beta = \delta\gamma$, $w = \gamma z$, and $z = vr$ for some $\gamma, z, r\in \A^*$, then
\begin{equation}
(s_\alpha E(F; v)s^*_\beta)(s_\delta E(G; w)s_\eta^*) = s_{\alpha} E(Fr\cup G \cup\{\delta w\}; z) s^*_{\eta\gamma}
\end{equation}
\item If $\beta = \delta\gamma$, $w = \gamma z$, and $v = zr$ for some $\gamma, z, r\in \A^*$, then
\begin{equation}
(s_\alpha E(F; v)s^*_\beta)(s_\delta E(G; w)s_\eta^*) = s_{\alpha} E(F\cup Gr \cup\{\delta wr\}; v) s^*_{\eta\gamma}
\end{equation}
\item If none of the conditions in 1--6 above hold, then $(s_\alpha E(F; v)s^*_\beta)(s_\delta E(G; w)s_\eta^*) = 0$.
\end{enumerate}
\end{lem}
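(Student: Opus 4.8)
The plan is to reduce everything to the two basic facts already in hand: the prefix-matching dichotomy that makes a product of two ``monomials'' $s_\alpha E(F;v)s_\beta^*$ and $s_\delta E(G;w)s_\eta^*$ nonzero only when $\beta$ is a prefix of $\delta$ or vice versa (this is the computation begun in the proof of Proposition \ref{SXprop}), and the idempotent multiplication rules of Lemma \ref{idempotentlemma}. Cases 1--3 are the sub-cases of $\delta=\beta\gamma$, and cases 4--6 the sub-cases of $\beta=\delta\gamma$; since the two families are mirror images of each other under the involution $(s_\alpha E(F;v)s_\beta^*)^*=s_\beta E(F;v)s_\alpha^*$ (using that $E(F;v)$ is idempotent, hence self-adjoint), I would prove 1--3 in detail and then just remark that 4--6 follow by applying the involution to the product and invoking 1--3, being careful to track how the roles of $F,v$ and $G,w$ swap. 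Case 7 is immediate: if $\beta\not\preceq\delta$ and $\delta\not\preceq\beta$ then $s_\beta^* s_\delta=0$, so the product vanishes.

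For cases 1--3 I would start from
\[
(s_\alpha E(F;v)s_\beta^*)(s_\delta E(G;w)s_\eta^*) = s_\alpha E(F;v)\,s_\beta^* s_{\beta\gamma}\, E(G;w)s_\eta^* = s_\alpha E(F;v)\,(s_\gamma s_\beta^* s_\beta)\, E(G;w)s_\eta^*,
\]
and then push the idempotent $s_\beta^* s_\beta = E(\beta,\epsilon)$ leftward past $E(F;v)$; since $\beta$ is a prefix of $\beta v$ (when we have first absorbed $s_\gamma$) one gets $s_\alpha s_\gamma\, \big(s_\gamma^* E(F;v) s_\gamma\big)\, E(\beta\gamma\cdots)\cdots$ — more cleanly, I would first rewrite $s_\alpha E(F;v)s_\beta^* = s_{\alpha\gamma} s_\gamma^* E(F;v) s_\gamma s_{\beta\gamma}^*$ (inserting $s_\gamma s_\gamma^*$ appropriately), use Lemma \ref{idempotentlemma} to identify $s_\gamma^* E(F;v)s_\gamma$, and thereby reduce to a product of the form $s_{\alpha\gamma} E(H;?) E(G;w) s_\eta^*$ with $H$ an explicit finite set; then the three sub-cases 1, 2, 3 correspond exactly to the three ways the remaining prefix comparison between $v$ (shifted) and $w$ can go, namely $w$ extends the shifted $v$, the shifted $v$ extends $w$ properly with the overhang itself extending $w$, or the shifted $v$ extends $w$ with $w$ extending the overhang — and in each the final $E(\cdot;\cdot)E(\cdot;\cdot)$ is evaluated by the product rule \eqref{idempotentproduct} from Lemma \ref{idempotentlemma}. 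The extra generators $\{\gamma w\}$, $\{\delta w\}$, $\{\beta v\}$, etc.\ that appear in the stated formulas come precisely from the ``$s^* s$'' idempotents $E(\beta\gamma,\epsilon)$-type terms that one picks up when commuting things past each other, together with the observation recorded in the remark after Proposition \ref{SXprop} that $s_\alpha E(F;v)s_\beta^* = s_\alpha E(F\cup\{\alpha v,\beta v\};v)s_\beta^*$.

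The main obstacle is bookkeeping rather than any conceptual difficulty: keeping track of which word gets prepended or appended to which finite set $F$ or $G$ as the idempotents are shuffled, and getting the indices on the $s$'s exactly right so that the left-hand and right-hand ``legs'' come out as $s_{\alpha\gamma}$ and $s_\eta^*$ (resp.\ $s_\alpha$ and $s_{\eta\gamma}^*$). I would organize this by always normalizing a monomial to the form $s_{(\text{long left word})} \big(\prod s_f^* s_f\big) s_{(\text{long right word})}^*$ before multiplying, applying one case of Lemma \ref{idempotentlemma} at a time, and only at the very end re-collecting the middle idempotent into $E(\,\cdot\,;\,\cdot\,)$ form. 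A sanity check I would run on each of the six formulas: specialize to $F=\{\epsilon\}=G$, $\alpha=\beta=\delta=\eta=\epsilon$, so the monomials become plain idempotents $E(\epsilon;v)=E(v)$ and $E(w)$, and verify the output matches $E(v)E(w)$ as given by \eqref{idempotentproduct}; and separately specialize to $v=w=\epsilon$ to check the pure ``partial isometry'' multiplication $s_\mu s_\nu^* \cdot s_\rho s_\tau^*$. Once the normal-form reduction is set up, each case is a few lines, and case 7 together with the involution symmetry disposes of the rest.
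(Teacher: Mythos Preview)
Your approach is exactly the paper's: the published proof is the single sentence ``This follows from Lemma \ref{idempotentlemma}, and is left to the enthusiastic reader,'' so your plan to reduce everything to the prefix dichotomy for $s_\beta^* s_\delta$ and then repeatedly apply the product rules of Lemma \ref{idempotentlemma} (with the involution trick to get cases 4--6 from 1--3) is precisely what is intended. One small slip to fix before you compute in earnest: in your first display you write $s_\beta^* s_{\beta\gamma} = s_\gamma s_\beta^* s_\beta$, but the correct identity is $s_\beta^* s_{\beta\gamma} = s_\beta^* s_\beta\, s_\gamma$ (the idempotent $s_\beta^* s_\beta$ sits to the \emph{left} of $s_\gamma$); your ``more cleanly'' alternative of conjugating by $s_\gamma$ via Lemma \ref{idempotentlemma} avoids this and is the right way to proceed.
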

\begin{proof}
This follows from Lemma \ref{idempotentlemma}, and is left to the enthusiastic reader.
\end{proof}
\begin{lem}\label{rangeandsource}
Let $\X$ be a one-sided subshift over $\A$, let $\alpha, \beta, v\in\A^*$ and $F\fs \A^*$. Then
\[
(s_\alpha E(F; v)s^*_\beta)(s_\alpha E(F; v)s^*_\beta)^* = E(F\cup\{\beta v\};\alpha v)
\]
\[
(s_\alpha E(F; v)s^*_\beta)^*(s_\alpha E(F; v)s^*_\beta) = E(F\cup\{\alpha v\};\beta v).
\]
\end{lem}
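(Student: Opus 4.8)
The plan is to compute each of the two products directly, reducing everything to manipulations of the partial bijections $s_\mu$ and the idempotents $E(F;v)$ already recorded in Lemmas \ref{idempotentlemma} and \ref{productcomputation}. Since $(s_\alpha E(F;v)s^*_\beta)^* = s_\beta E(F;v)s^*_\alpha$ (using that $E(F;v)$ is an idempotent, hence self-adjoint), the first identity asks for $s_\alpha E(F;v)s^*_\beta s_\beta E(F;v)s^*_\alpha$ and the second for $s_\beta E(F;v)s^*_\alpha s_\alpha E(F;v)s^*_\beta$. By symmetry (swapping the roles of $\alpha$ and $\beta$) it suffices to prove one of them, so I would focus on the first.

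First I would simplify $s^*_\beta s_\beta$: this is the idempotent $E(\beta,\epsilon)\cdot(\text{something})$ — more precisely $s^*_\beta s_\beta = \mathrm{Id}_{C(\beta,\epsilon)} = E(\{\beta\};\epsilon)$, the identity on the domain of $s_\beta$. So the middle of the first product becomes $E(F;v)\,E(\{\beta\};\epsilon)\,E(F;v)$. Using commutativity of idempotents and Lemma \ref{idempotentlemma}'s product formula (the case $v = \epsilon z$ with $z = v$), $E(F;v)E(\{\beta\};\epsilon) = E(F \cup \{\beta v\}; v)$, and multiplying by $E(F;v)$ again just re-absorbs into the same idempotent, giving $E(F\cup\{\beta v\};v)$. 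Thus the product equals $s_\alpha E(F\cup\{\beta v\};v) s^*_\alpha$.

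Next I would push the outer $s_\alpha$ and $s^*_\alpha$ through: by the formula $E(F';v) = s_v(\prod_{f\in F'} s^*_f s_f)s^*_v$, one has $s_\alpha E(F';v) s^*_\alpha = s_{\alpha v}(\prod_{f\in F'} s^*_{f} s_{f}) s^*_{\alpha v}$ after inserting $s^*_\alpha s_\alpha$ appropriately — but one must be careful, as the bare translation of the index set is not quite right; rather I would mimic the computation in Lemma \ref{idempotentlemma} where conjugating by $s_\alpha$ turns $E(F;v)$ on the left-hand side into $E(\alpha F'; \alpha v)$ only after the domain-restriction idempotent $s^*_\alpha s_\alpha$ is accounted for. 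Concretely $s_\alpha E(F'; v)s^*_\alpha = s_{\alpha v}\big(\prod_{f\in F'}s^*_{f}s_{f}\big)s^*_{\alpha v}$, and since $\alpha v \in \alpha v\cdot(\text{prefix})$ trivially, comparing with the defining formula for $E(F\cup\{\beta v\}; \alpha v)$ — whose $F$-entry gets an extra $\alpha v$ from the outer $s_\alpha s^*_\alpha = \mathrm{Id}_{C(\alpha)}$ restriction — yields exactly $E(F\cup\{\beta v\};\alpha v)$. This is precisely a special case of Lemma \ref{productcomputation}(1) or (4) (with $\delta = \beta$, $\eta = \alpha$, $\gamma = \epsilon$, $w = v$, $G = F$), so I could alternatively just invoke that lemma.

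I expect the only real obstacle to be bookkeeping: making sure the idempotent index sets come out as $F\cup\{\beta v\}$ and $F\cup\{\alpha v\}$ rather than missing the $\alpha v$ or $\beta v$ term that appears when an outer $s_\alpha s^*_\alpha$ or inner $s^*_\beta s_\beta$ restriction is absorbed. The cleanest route is probably to not expand into $s_a$'s at all, but instead apply Lemma \ref{productcomputation} directly to $(s_\alpha E(F;v)s^*_\beta)(s_\beta E(F;v)s^*_\alpha)$, which falls into case (1) (taking $\delta = \beta$, so $\gamma = \epsilon$, $z = v$ forcing $w = v$, $G = F$, $\eta = \alpha$): the formula there gives $s_{\alpha}E(Fv v^{-1}\cdots)$ — here $\gamma = \epsilon$ so $Fzw$ with $z=\epsilon,w=v$ is $Fv$, wait — one must recheck which subcase of Lemma \ref{productcomputation} applies when $\gamma=\epsilon$ and $v=w$; it is case (1) with the substitution collapsing $Fzw \cup G \cup \{\gamma w\}\cup\{\delta w\}$ to $F\cup\{\beta v\}$ once the redundant terms (note $Fzw = F\epsilon v$... ) are simplified. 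I would state this carefully in the final write-up, but the computation is routine once the right subcase is identified, and the symmetric identity follows by interchanging $\alpha$ and $\beta$.
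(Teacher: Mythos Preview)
Your approach matches the paper's: the proof there is a single line, ``This follows from Lemma~\ref{productcomputation}.'' Your direct computation in the middle paragraphs (using $s_\beta^* s_\beta = E(\{\beta\};\epsilon)$, then Lemma~\ref{idempotentlemma} to get $E(F\cup\{\beta v\};v)$, then $s_\alpha E(F';v)s_\alpha^* = s_{\alpha v}(\prod_f s_f^*s_f)s_{\alpha v}^* = E(F';\alpha v)$) is correct and is really all that is needed.

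The one slip is in your final paragraph: the product $(s_\alpha E(F;v)s_\beta^*)(s_\beta E(F;v)s_\alpha^*)$ does \emph{not} fall under case~(1) of Lemma~\ref{productcomputation} unless $v=\epsilon$. With $\delta=\beta$ you get $\gamma=\epsilon$, and then ``$\gamma=vz$'' forces $v=z=\epsilon$. The correct subcase is (2) (equivalently (3), since they coincide at the boundary $r=\epsilon$): $\gamma=\epsilon$, $z=v$, $r=\epsilon$, yielding $s_\alpha E(F\cup F\cup\{\beta v\};v)s_\alpha^* = s_\alpha E(F\cup\{\beta v\};v)s_\alpha^* = E(F\cup\{\beta v\};\alpha v)$, exactly as your direct argument already showed. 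So the write-up just needs that case corrected; the rest is fine.
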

\begin{proof}
This follows from Lemma \ref{productcomputation}.
\end{proof}
\begin{prop}
Let $\X$ be a one-sided subshift over $\A$, let $\s_\X$ be as in \eqref{SXdef}, and let $E_\X$ be as in \eqref{EXdef}. Then $E(\s_\X) = E_\X$.
\end{prop}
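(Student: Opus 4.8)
The goal is to prove $E(\s_\X) = E_\X$, where $\s_\X$ is as in \eqref{SXdef} and $E_\X$ is as in \eqref{EXdef}.

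\textbf{Plan.} The inclusion $E_\X \subseteq E(\s_\X)$ is essentially immediate: each $E(F;v) = s_v\left(\prod_{f\in F}s_f^*s_f\right)s_v^*$ is visibly of the form $s_v\,e\,s_v^*$ where $e = \prod_{f\in F}s_f^*s_f$ is a product of idempotents of $\s_\X$, hence is an idempotent of $\s_\X$; and $0 \in E(\s_\X)$ trivially. So the real content is the reverse inclusion $E(\s_\X) \subseteq E_\X$: every idempotent of $\s_\X$ has the form $E(F;v)$ (or is $0$).

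\textbf{Main step.} Take a nonzero idempotent $t \in E(\s_\X)$ and write it in the lowest-terms form $t = s_\alpha E(F;v)s_\beta^*$ with $\alpha_{|\alpha|} \neq \beta_{|\beta|}$ (or one of $\alpha,\beta$ empty). The plan is to use $t = t^* = t^2$ together with the range/source computation of Lemma \ref{rangeandsource}. From $t = t^*$ we get $s_\alpha E(F;v)s_\beta^* = s_\beta E(F;v)s_\alpha^*$; by the uniqueness clause in the Remark following Proposition \ref{SXprop} (lowest-terms representations have unique $\alpha,\beta$), this forces $\alpha = \beta$. Once $\alpha = \beta$, the lowest-terms condition $\alpha_{|\alpha|}\neq\beta_{|\beta|}$ can only hold if $\alpha = \beta = \epsilon$. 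Then $t = E(F;v)$ for some $F,v$, which is exactly an element of $E_\X$. Alternatively, and perhaps more cleanly, one applies Lemma \ref{rangeandsource}: since $t$ is an idempotent, $t = tt^* = E(F\cup\{\beta v\}; \alpha v)$, which is already of the form $E(F';v')$ and hence lies in $E_\X$; this route avoids appealing to the uniqueness of lowest-terms representations and works directly.

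\textbf{Expected obstacle.} There is essentially no hard step here — the proposition is a bookkeeping consequence of the computations already assembled in Lemmas \ref{idempotentlemma}, \ref{productcomputation}, and \ref{rangeandsource}. The only point requiring a little care is making sure that when we write a general element of $\s_\X$ in the form $s_\alpha E(F;v)s_\beta^*$ and then invoke Lemma \ref{rangeandsource}, we are entitled to do so for \emph{any} representation (not just the lowest-terms one); but Lemma \ref{rangeandsource} is stated for arbitrary $\alpha,\beta,v,F$, so this is fine. I would therefore present the proof as: (i) note $E_\X \subseteq E(\s_\X)$ since $E(F;v) = s_v(\prod_{f\in F}s_f^*s_f)s_v^*$ is idempotent; (ii) for $t \in E(\s_\X)\setminus\{0\}$, write $t = s_\alpha E(F;v)s_\beta^*$ and use $t = tt^*$ with Lemma \ref{rangeandsource} to conclude $t = E(F\cup\{\beta v\};\alpha v) \in E_\X$; (iii) handle $0$ trivially.
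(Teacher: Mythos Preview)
Your proposal is correct, and your second route---writing an idempotent $t$ as $s_\alpha E(F;v)s_\beta^*$ and then invoking Lemma~\ref{rangeandsource} via $t = tt^* = E(F\cup\{\beta v\};\alpha v)$---is exactly the paper's argument, which simply cites Lemma~\ref{rangeandsource} together with the standard fact that $E(S) = \{s^*s : s \in S\}$. Your first route (using $t = t^*$ and uniqueness of lowest-terms to force $\alpha = \beta = \epsilon$) is a minor variant that also works but is not needed.
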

\begin{proof}
This follows from Lemma \ref{rangeandsource} together with the fact that the set of idempotents of an inverse semigroup $S$ coincides with the set of elements of the form $s^*s$ for $s\in S$.

\end{proof}

\begin{rmk}
A recent preprint of Boava, de Castro, and Mortari \cite{BdCM15} associates an inverse semigroup to every {\em labeled space}. In \cite{BCP12} Bates, Carlsen, and Pask associate a labeled space to any one-sided subshift $\X$, such that the C*-algebra associated to the constructed labeled space is isomorphic to $\OX$, see \cite[Example 4]{BCP12}. We caution that the inverse semigroup that one obtains by combining \cite{BdCM15} and \cite{BCP12} (say, $\tilde\s_\X$) will not be the same as our $\s_\X$ -- in fact $E(\tilde\s_\X)$ will be isomorphic to the Boolean algebra generated by the $C(v,w)$ as $v$ and $w$ range over $\A^*$. Hence, their set of idempotents will contain complements of the $C(v,w)$ while ours (in general) will not. For the specific situation of a subshift $\X$ our construction seems  natural, as the only idempotents which appear in our construction are those which arise directly from the partial bijections arising from the shift map on $\X$.
\end{rmk}
\subsection{Properties of $\s_\X$}
We now discuss some useful properties which our newly-defined inverse semigroup $\s_\X$ may possess.
\begin{defn}
Let $S$ be an inverse semigroup with identity and zero (in other words, an inverse monoid with zero). 
\begin{enumerate}\label{ISGproperties}
\item We say that $S$ is {\em $E^*$-unitary} if $0 \neq e \leqslant s$ with $e\in E(S)$ implies that $s\in E(S)$.
\item If $\Gamma$ is a group and $\phi: S\setminus\{0\}\to \Gamma$ such that $s, t\in S$ with $st\neq 0$ implies that $\phi(st) = \phi(s)\phi(t)$, then we say that $\phi$ is a {\em partial homomorphism} from $S$ to $\Gamma$. If, in addition, $\phi^{-1}(1_\Gamma) = E(S)$, then $\phi$ is called an {\em idempotent pure partial homomorphism}.
\item We say that $S$ is {\em strongly $E^*$-unitary} if there exists a group $\Gamma$ and an idempotent-pure partial homomorphism from $S$ to $\Gamma$.
\item We say that $S$ is {\em F*-inverse} if for each $s\in S$ there exists a unique maximal element above $s$.
\item We say that $S$ is {\em strongly $F^*$-inverse} if there exists a group $\Gamma$ and an idempotent-pure partial homomorphism $\phi$ from $S$ to $\Gamma$ such that for all $g\in \Gamma$, $\phi^{-1}(g)$ has a maximal element whenever it is nonempty.
\end{enumerate}
\end{defn}

\begin{lem}
Let $\X$ be a one-sided subshift over $\A$, and let $\s_\X$ be as in \eqref{SXdef}. Then $\s_\X$ is $E^*$-unitary.
\end{lem}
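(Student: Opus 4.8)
The plan is to unwind the definition of $E^*$-unitarity directly using the normal form \eqref{SXdef} for elements of $\s_\X$ together with the explicit range and source computations of Lemma \ref{rangeandsource}. So suppose $0 \neq e \leqslant s$ with $e \in E(\s_\X)$ and $s = s_\alpha E(F;v)s^*_\beta$ written in lowest terms, i.e. with $\alpha_{|\alpha|} \neq \beta_{|\beta|}$ (or one of $\alpha,\beta$ empty). The goal is to force $\alpha = \beta$, since once $\alpha = \beta$ we can absorb: $s_\alpha E(F;v) s^*_\alpha$ is visibly an idempotent (it equals $E(F \cup \{\alpha v\}; \alpha v)$ by Lemma \ref{rangeandsource}, or one checks directly that it is self-adjoint and its own square). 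The lowest-terms condition $\alpha_{|\alpha|} \neq \beta_{|\beta|}$ will be exactly what rules out $\alpha \neq \beta$: if $\alpha$ and $\beta$ were distinct, their last letters being different means neither is a prefix of the other, and this is the obstruction we will exploit.

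First I would recall that for idempotents $e \leqslant f$ iff $ef = e$, but more usefully, the order on a general inverse semigroup says $e \leqslant s$ means $s e = e$ (using $e^* = e$, $e \leqslant s \iff s e^* e = e \iff s e = e$). Since $e$ is a nonzero idempotent below $s$, we have $e = e e^* \leqslant s s^*$ and $e = e^* e \leqslant s^* s$, so both $ss^*$ and $s^*s$ are nonzero, hence $s \neq 0$. Now the key move: from $e \leqslant s$ we also get $e = s e$, and I want to compare "domains and ranges." Concretely, as partial bijections of $\X$, $s \leqslant t$ in $\I(\X)$ means $t$ extends $s$ as a function. Since $\s_\X \subset \I(\X)$, the relation $e \leqslant s$ means $s$ extends $e = \mathrm{Id}_{C}$ for some nonempty closed set $C$; that is, $s$ acts as the identity on $C$. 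But $s = s_\alpha E(F;v) s^*_\beta$ sends a point $\beta x \in C(F \cup \{\alpha v\}; \beta v)$ to $\alpha x$. For $s$ to fix some point $\beta x$, we need $\beta x = \alpha x$, which (since $|\alpha|, |\beta|$ are finite) forces $\alpha = \beta$ — here is where distinctness of $\alpha$ and $\beta$ gives a contradiction, because $\beta x = \alpha x$ with $\alpha \neq \beta$ is impossible when one of them is not a prefix of the other, and the lowest-terms hypothesis guarantees neither is a prefix of the other unless $\alpha = \beta = \epsilon$ or they are equal.

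So the argument reduces to: the source set of $e$ is nonempty (as $e \neq 0$) and contained in the source set $C(F \cup \{\alpha v\}; \beta v)$ of $s$, pick any point $\beta x$ in it, note $s(\beta x) = \alpha x = \beta x$, conclude $\alpha = \beta$, and then $s = s_\alpha E(F;v)s^*_\alpha = E(F \cup \{\alpha v\}; \alpha v) \in E(\s_\X) = E_\X$, as required. I expect the only real subtlety — and the step to state carefully rather than wave at — is the passage from "$e \leqslant s$ in $\s_\X$" to "$s$ fixes every point of a nonempty set": this is precisely the concrete description of the order on $\I(\X)$ recorded in Section \ref{ISGsubsection} ("$g \leqslant h$ iff $h$ extends $g$ as a function"), so it is not hard, but it is the conceptual heart of why the lowest-terms normal form does the work. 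No heavy computation is needed; Lemma \ref{rangeandsource} is used only to identify $s_\alpha E(F;v) s^*_\alpha$ as an element of $E_\X$ once we know $\alpha = \beta$.
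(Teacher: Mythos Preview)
Your approach is different from the paper's: you work concretely inside $\I(\X)$, using that $e\leqslant s$ means $s$ extends $\mathrm{Id}_C$ and hence fixes every point of a nonempty set, whereas the paper argues algebraically by computing $se$ via Lemma~\ref{idempotentlemma} to get $se=s_\alpha E(F';v')s_\beta^*$ and then reading off $\alpha=\beta$ from the normal form.

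There is, however, a real gap in your argument. You assert that the lowest-terms condition $\alpha_{|\alpha|}\neq\beta_{|\beta|}$ ``guarantees neither is a prefix of the other,'' and hence that $\alpha x=\beta x$ forces $\alpha=\beta$. Both claims are false. Take $\alpha=a$ and $\beta=ab$ with $a\neq b$: the last letters differ, so the pair is in lowest terms, yet $\alpha$ is a prefix of $\beta$; and $\alpha x=\beta x$ is solved by $x=bbb\cdots$. Concretely, in the subshift $\X=\{abbb\cdots,\ bbb\cdots\}$ the element $s=s_a s_{ab}^*$ is nonzero and in lowest terms with $\alpha\neq\beta$, and one has $0\neq e\leqslant s$ for $e=E(\epsilon,a)=\mathrm{Id}_{\{abbb\cdots\}}$. (The lemma survives in this example only because $s$ happens to \emph{equal} $e$ and is therefore idempotent --- but your argument, which attempts to deduce $\alpha=\beta$, does not detect this.) So the passage from ``$s$ fixes a point of $\X$'' to ``$\alpha=\beta$'' fails, and the concrete $\I(\X)$ approach would need additional work to handle the case where one of $\alpha,\beta$ is a proper prefix of the other.
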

\begin{proof}
We note that for an idempotent $e$, $e \leqslant s$ if and only if $se = e$. Suppose that $\alpha, \beta, v, w\in \A^*$, that $F, G \fs \A^*$, that $\alpha_{|\alpha|}\neq \beta_{|\beta|}$, and that $E(G;w), s_\alpha E(F; v) s_\beta^* \neq 0$. We have
\begin{eqnarray*}
(s_\alpha E(F; v) s_\beta^*)E(G;w) & = & s_\alpha E(F; v)(s_\beta^*E(G;w)s_\beta) s_\beta^*\\
                                   & = & s_\alpha E(F'; v')s_\beta^*\hspace{0.5cm}\text{for some }F'\fs \A^*, v'\in\A^*
\end{eqnarray*}
If this is equal to $E(G;w)$, we must have that $\alpha = \beta$. Since the last letters of $\alpha, \beta$ were assumed to be unequal, this implies that $\alpha = \beta = \epsilon$, and hence $s_\alpha E(F; v) s_\beta^*$ is an idempotent as required.
\end{proof}
We now prove that $\s_\X$ is strongly $E^*$-unitary, which seems to make the above lemma a waste because evidently being strongly $E^*$-unitary implies being $E^*$-unitary. Still, we believe that the above lemma is instructive, so there it stays.
\begin{lem}\label{stronglyEunitarylemma}
Let $\X$ be a one-sided subshift over $\A$, and let $\s_\X$ be as in \eqref{SXdef}. Then $\s_\X$ is strongly $E^*$-unitary.
\end{lem}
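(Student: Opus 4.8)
The plan is to take $\Gamma$ to be the free group on $\A$, which I will denote $\mathbb{F}$, and to identify $\A^*$ with its image in $\mathbb{F}$ under the canonical injective monoid homomorphism sending a word to the corresponding product of generators, so that concatenation of words becomes multiplication in $\mathbb{F}$. On $\s_\X$ I would define $\phi\colon \s_\X\setminus\{0\}\to\mathbb{F}$ by $\phi(s_\alpha E(F;v)s_\beta^*)=\alpha\beta^{-1}$ and then verify that $\phi$ is an idempotent-pure partial homomorphism; this yields strong $E^*$-unitarity and, as a bonus, identifies the group that is used later for the partial crossed product description.

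The first point is that $\phi$ is well defined. By the remark following Proposition \ref{SXprop}, any nonzero element of $\s_\X$ can be written in the lowest-terms form \eqref{SXdef}, and the passage to lowest terms consists of repeatedly deleting a common last letter $a$ of $\alpha$ and $\beta$ and prepending it to $v$; each such step replaces $\alpha\beta^{-1}$ by $\alpha' a a^{-1}(\beta')^{-1}=\alpha'(\beta')^{-1}$ in $\mathbb{F}$, leaving the value unchanged. Since that same remark asserts that two elements written in lowest terms are equal only if the corresponding $\alpha$'s and $\beta$'s coincide, $\phi$ is unambiguous (and it genuinely ignores the ``middle'' data $E(F;v)$, consistently with the identity $s_\alpha E(F;v)s_\beta^*=s_\alpha E(F\cup\{\alpha v,\beta v\};v)s_\beta^*$).

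Next comes the partial homomorphism property, which is the bulk of the work but is entirely mechanical given Lemma \ref{productcomputation}. Write $s=s_\alpha E(F;v)s_\beta^*$ and $t=s_\delta E(G;w)s_\eta^*$ and assume $st\neq 0$, so one of cases (1)--(6) of Lemma \ref{productcomputation} applies. In cases (1)--(3) we have $\delta=\beta\gamma$ and $st=s_{\alpha\gamma}E(\,\cdot\,;\,\cdot\,)s_\eta^*$, whence $\phi(st)=(\alpha\gamma)\eta^{-1}=(\alpha\beta^{-1})(\beta\gamma\eta^{-1})=\phi(s)\phi(t)$; in cases (4)--(6) we have $\beta=\delta\gamma$ and $st=s_\alpha E(\,\cdot\,;\,\cdot\,)s_{\eta\gamma}^*$, whence $\phi(st)=\alpha(\eta\gamma)^{-1}=\alpha\gamma^{-1}\eta^{-1}=(\alpha\beta^{-1})(\delta\eta^{-1})=\phi(s)\phi(t)$, using $\beta^{-1}=\gamma^{-1}\delta^{-1}$. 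Case (7) is exactly the situation $st=0$, for which nothing is required.

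Finally I would establish idempotent purity, i.e.\ $\phi^{-1}(1_{\mathbb{F}})=E(\s_\X)$. Every idempotent has the form $E(F;v)=s_\epsilon E(F;v)s_\epsilon^*$ and so maps to $\epsilon\epsilon^{-1}=1$. Conversely, suppose $s_\alpha E(F;v)s_\beta^*$ is in lowest terms with $\alpha\beta^{-1}=1$. Since $\alpha_{|\alpha|}\neq\beta_{|\beta|}$, there is no cancellation at the junction where the positive word $\alpha$ meets the negative word $\beta^{-1}$, so $\alpha\beta^{-1}$ is already a reduced word in $\mathbb{F}$ and can be trivial only when $\alpha=\beta=\epsilon$; thus $s_\alpha E(F;v)s_\beta^*=E(F;v)\in E(\s_\X)$. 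The step I would be most careful with is this last ``no cancellation'' observation together with the well-definedness argument; everything else is routine bookkeeping on top of Lemma \ref{productcomputation} and the remark after Proposition \ref{SXprop}.
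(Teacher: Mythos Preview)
Your proof is correct and follows essentially the same approach as the paper: define $\phi(s_\alpha E(F;v)s_\beta^*)=\alpha\beta^{-1}$ into $\mathbb{F}_\A$, verify the partial homomorphism property via Lemma~\ref{productcomputation}, and check idempotent purity using the lowest-terms form \eqref{SXdef}. You are somewhat more careful than the paper---explicitly addressing well-definedness and handling both the $\delta=\beta\gamma$ and $\beta=\delta\gamma$ cases---but the argument is the same.
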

\begin{proof}
Let $\mathbb{F}_\A$ denote the free group on the alphabet $\A$. For $\alpha, \beta, v\in \A^*$, $F\fs \A^*$ such that $\alpha_{|\alpha|}\neq \beta_{|\beta|}$, we define a map $\phi: \s_\X\setminus\{0\}\to \mathbb{F}_\A$ by 
\[
\phi(s_\alpha E(F; v) s_\beta^*) = \alpha\beta^{-1}.
\]
We claim that this map is a partial homomorphism. To prove this, we take  $\alpha, \beta, \delta, \eta, v, w\in\A^*$ and $F, G\subset_{\text{fin}}\A^*$ such that $\alpha_{|\alpha|}\neq \beta_{|\beta|}$, $\delta_{|\delta|}\neq \eta_{|\eta|}$ and suppose that $(s_\alpha E(F; v)s^*_\beta)(s_\delta E(G; w)s_\eta^*) \neq 0$. This implies that either $\delta = \beta\gamma$ or $\beta = \delta\gamma$ for some $\gamma \in \A^*$. 

If $\delta = \beta\gamma$ for some $\gamma \in \A^*$, then 
\[
\phi(s_\alpha E(F; v)s^*_\beta)\phi(s_\delta E(G; w)s_\eta^*) = \alpha\beta^{-1}\delta\eta^{-1} = \alpha\beta^{-1}\beta\gamma\eta^{-1} = \alpha\gamma\eta^{-1}.
\]
On the other hand, in each of the first three cases of Lemma \ref{productcomputation}, the product of these two elements is $s_{\alpha\gamma} A s_{\beta}^*$ for some $A\in E_\X$. Hence $\phi(s_\alpha E(F; v)s^*_\beta s_\delta E(G; w)s_\eta^*) = \alpha\gamma\eta^{-1}$. The case $\beta = \delta\gamma$ is similar. Hence $\phi$ is a partial homomorphism.

Furthermore, if $\phi(s_\alpha E(F; v) s_\beta^*) = \alpha\beta^{-1} = 1_{\mathbb{F}_\A}$, then $\alpha = \beta$, and as before this implies that $s_\alpha E(F; v) s_\beta^* = E(F;v)$, an idempotent. Thus, $\phi$ is idempotent pure.
\end{proof}
Finally, we consider the last two properties from Definition \ref{ISGproperties}.
\begin{lem}\label{maximalelementlemma}
Suppose that $\alpha, \beta, v\in \A^*$, that $F\fs \A^*$, and that $\alpha_{|\alpha|}\neq \beta_{|\beta|}$. Then $s_\alpha E(F; v) s_\beta^* \leqslant s_\alpha s_\beta^*$. Furthermore, if $s\in \s_\X$ and $s_\alpha E(F; v) s_\beta^* \leqslant s$, then $s\leqslant s_\alpha s_\beta^*$.
\end{lem}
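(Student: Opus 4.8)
The plan is to verify the two claims separately, using the order characterization $x \leqslant y \iff y x^* x = x$ together with the explicit computations from Lemmas \ref{rangeandsource} and \ref{productcomputation}. For the first claim, that $s_\alpha E(F;v) s_\beta^* \leqslant s_\alpha s_\beta^*$, I would compute $(s_\alpha E(F;v)s_\beta^*)^*(s_\alpha E(F;v)s_\beta^*)$, which by Lemma \ref{rangeandsource} equals the idempotent $E(F \cup \{\alpha v\}; \beta v)$. Then I multiply $s_\alpha s_\beta^*$ on the right by this idempotent: $s_\alpha s_\beta^* E(F\cup\{\alpha v\};\beta v)$. Writing $s_\alpha s_\beta^* = s_\alpha E(\{\epsilon\};\epsilon) s_\beta^*$ (or, more carefully, $s_\alpha E(\{\beta\};\epsilon)s_\beta^*$ so that the domain data is recorded honestly) and applying the product formula from Lemma \ref{productcomputation} with the source of the second factor sitting below the range of the first, the product collapses to $s_\alpha E(F';\beta v) s_\beta^*$ for an appropriate finite set $F'$; a short check that $F'$ defines the same partial bijection as $F \cup \{\alpha v, \beta v\}$ — hence the same element $s_\alpha E(F;v)s_\beta^*$ in lowest terms, using the identity $s_\alpha E(F;v)s_\beta^* = s_\alpha E(F\cup\{\alpha v,\beta v\};v)s_\beta^*$ from the earlier remark — finishes this direction.

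For the maximality claim, suppose $s \in \s_\X$ with $s_\alpha E(F;v) s_\beta^* \leqslant s$, and write $s = s_\delta E(G;w) s_\eta^*$ in lowest terms (the case $s = 0$ is impossible since $s_\alpha E(F;v)s_\beta^* \neq 0$ and $0$ has nothing nonzero below it). From $s (s_\alpha E(F;v)s_\beta^*)^* (s_\alpha E(F;v)s_\beta^*) = s_\alpha E(F;v) s_\beta^*$, in particular the left side is nonzero, so the product $s_\delta E(G;w)s_\eta^* \cdot E(F\cup\{\alpha v\};\beta v)$ is nonzero; by Lemma \ref{productcomputation} this forces $\eta$ and $\beta$ to be prefix-comparable, and the resulting element has the form $s_\delta (\text{idempotent}) s_{\eta'}^*$ where $\eta'$ is whichever of $\eta,\beta$ is longer. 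Since this must equal $s_\alpha E(F;v)s_\beta^*$ written in lowest terms, comparing the leading-letter-distinct normal forms (as in the Remark after Proposition \ref{SXprop}) gives $s_\delta = s_\alpha$ after cancellation and $\eta' $ matching $\beta$, i.e. $\delta = \alpha$ and $\eta = \beta$ (one must rule out $\eta$ being a proper extension of $\beta$, which would contradict $\delta = \alpha$ and the last-letter condition, or directly contradict the normal form). Once $\delta = \alpha$ and $\eta = \beta$, we have $s = s_\alpha E(G;w) s_\beta^* \leqslant s_\alpha s_\beta^*$ by the first part of the lemma.

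The main obstacle I anticipate is the bookkeeping in the second part: extracting $\delta = \alpha$ and $\eta = \beta$ from the single order relation requires carefully tracking how the prefix data propagates through the product formulas of Lemma \ref{productcomputation} and invoking the uniqueness of the normal form (lowest-terms) representation. One has to be a little careful because, as the Remark warns, the idempotent middle term is \emph{not} uniquely determined by the element, so all the identification must be done on the $s_\alpha, s_\beta$ ``outer'' data, not on $E(F;v)$. A clean way to organize this is: first establish $\phi(s) = \alpha\beta^{-1}$ in $\mathbb{F}_\A$ using that $\phi$ from Lemma \ref{stronglyEunitarylemma} is a partial homomorphism and that $\phi$ of an idempotent is trivial (so $\phi(s) = \phi(s)\phi((s_\alpha E(F;v)s_\beta^*)^*(s_\alpha E(F;v)s_\beta^*)) = \phi(s_\alpha E(F;v)s_\beta^*) = \alpha\beta^{-1}$), which already pins down the reduced word $\alpha\beta^{-1}$; then combine this with the last-letter-distinct normal form to conclude $\delta = \alpha$, $\eta = \beta$ directly, bypassing most of the casework in Lemma \ref{productcomputation}.
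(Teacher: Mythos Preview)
Your proposal is correct and, for the first claim, follows exactly the paper's route: compute $t^*t = E(F\cup\{\alpha v\};\beta v)$ via Lemma~\ref{rangeandsource} and verify $s_\alpha s_\beta^* \cdot t^*t = t$ (the paper expands this by hand rather than citing Lemma~\ref{productcomputation}, but it is the same calculation).

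For the maximality claim, your primary approach and the paper's agree in spirit but differ in execution. The paper avoids all the prefix casework you anticipate by the one-line trick
\[
s_\delta A s_\eta^*\,E(F\cup\{\alpha v\};\beta v) \;=\; s_\delta A\,\bigl(s_\eta^* E(F\cup\{\alpha v\};\beta v) s_\eta\bigr)\,s_\eta^* \;=\; s_\delta B s_\eta^*,
\]
using $s_\eta^* = s_\eta^* s_\eta s_\eta^*$, commutativity of idempotents, and Lemma~\ref{idempotentlemma}. Since $\delta_{|\delta|}\neq\eta_{|\eta|}$ by hypothesis, this is already in lowest terms, so equality with $t$ forces $\delta=\alpha$, $\eta=\beta$ immediately. (A small slip in your write-up: the right outer word stays $\eta$, not ``whichever of $\eta,\beta$ is longer''; the paper's trick makes this transparent.) Your alternative route via the partial homomorphism $\phi$ from Lemma~\ref{stronglyEunitarylemma}---observing that $t\leqslant s$ with $t\neq 0$ gives $\phi(s)=\phi(t)=\alpha\beta^{-1}$, hence $\delta\eta^{-1}=\alpha\beta^{-1}$ as reduced words in $\mathbb{F}_\A$---is a genuinely different and arguably cleaner argument that the paper does not use, though $\phi$ is already in hand at this point.
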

\begin{proof}
Let $t = s_\alpha E(F; v) s_\beta^*$. We first must show that $s_\alpha s_\beta^* t^* t = t$. By Lemma \ref{rangeandsource}, $t^*t = E(F\cup\{\alpha v\}; \beta v)$. We calculate
\begin{eqnarray*}
s_\alpha s_\beta^* t^*t & = & s_\alpha s_\beta^* E(F\cup\{\alpha v\}; \beta v)\\
                        & = & s_\alpha s_\beta^* s_\beta E(F\cup\{\alpha v\}; v)s_\beta^*\\
                        & = & s_\alpha E(F\cup\{\alpha v\}; v) s_\beta^* s_\beta s_\beta^*\\
                        & = & s_\alpha E(F\cup\{\alpha v\}; v) s_\beta^*\\
                        & = & s_\alpha s_v s_v^* s_\alpha^* s_\alpha s_v \left(\prod_{f\in F}s_f^*s_f\right)s_v^* s_\beta^*\\
                        & = & s_\alpha s_v \left(\prod_{f\in F}s_f^*s_f\right)s_v^* s_\beta^*\\
                        & = & s_\alpha E(F; v)s_\beta^*
\end{eqnarray*}
Now, take $\delta, \eta, \in \A^*$ with $\delta_{|\delta|}\neq \eta_{|\eta|}$, $A\in E_\X$ and let $s = s_\delta A s_\eta^*$. Then
\[
s t^* t = s_\delta A s_\eta^* E(F\cup\{\alpha v\}; \beta v) = s_\delta A s_\eta^* E(F\cup\{\alpha v\}; \beta v)s_\eta s_\eta^* =  s_\delta B s_\eta^*
\]
for some $B\in E_\X$ by Lemma \ref{idempotentlemma}. If this is equal to $s$, then $\delta = \alpha$ and $\eta = \beta$. Thus by the above calculation, we must have $s\leqslant s_\alpha s_\beta^*$ as well.
\end{proof}

We can now prove the following.

\begin{prop}
Let $\X$ be a one-sided subshift over $\A$ and let $\s_\X$ be as in \ref{SXdef}. Then $\s_\X$ is strongly $F^*$-inverse.
\end{prop}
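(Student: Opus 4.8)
The plan is to take as group $\Gamma$ the free group $\mathbb{F}_\A$ together with the idempotent-pure partial homomorphism $\phi\colon\s_\X\setminus\{0\}\to\mathbb{F}_\A$, $\phi(s_\alpha E(F;v)s_\beta^*)=\alpha\beta^{-1}$, already produced in Lemma~\ref{stronglyEunitarylemma}; all that is left is to check the extra clause in the definition of strongly $F^*$-inverse, namely that $\phi^{-1}(g)$ has a maximal element whenever it is nonempty. I expect the natural candidate to be a ``prefix--suffix'' element $s_\alpha s_\beta^*$, and the bulk of the verification to be handed to us for free by Lemma~\ref{maximalelementlemma}, which is exactly the statement that such an element dominates everything in $\s_\X$ lying above $s_\alpha E(F;v)s_\beta^*$.

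In detail, I would fix $g\in\mathbb{F}_\A$ with $\phi^{-1}(g)\neq\emptyset$ and choose some $t\in\phi^{-1}(g)$, written in lowest terms as $t=s_\alpha E(F;v)s_\beta^*$ with $\alpha_{|\alpha|}\neq\beta_{|\beta|}$, so that $g=\alpha\beta^{-1}$. Since $t\neq 0$, Lemma~\ref{maximalelementlemma} gives $t\leqslant s_\alpha s_\beta^*$, hence $s_\alpha s_\beta^*\neq 0$; this element has the form \eqref{SXdef} because $s_\alpha s_\beta^*=s_\alpha E(\{\epsilon\};\epsilon)s_\beta^*$ and $E(\{\epsilon\};\epsilon)=s_\epsilon\in E_\X$ (the remark after \eqref{EXdef}); and $\phi(s_\alpha s_\beta^*)=\alpha\beta^{-1}=g$, so $s_\alpha s_\beta^*\in\phi^{-1}(g)$. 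For maximality, suppose $s\in\phi^{-1}(g)$ with $s_\alpha s_\beta^*\leqslant s$; applying the second assertion of Lemma~\ref{maximalelementlemma} to $s_\alpha E(\{\epsilon\};\epsilon)s_\beta^*=s_\alpha s_\beta^*$ yields $s\leqslant s_\alpha s_\beta^*$, and since the natural partial order on an inverse semigroup is antisymmetric, $s=s_\alpha s_\beta^*$. Thus $s_\alpha s_\beta^*$ is a maximal element of $\phi^{-1}(g)$, and as $g$ was arbitrary, $\s_\X$ is strongly $F^*$-inverse, witnessed by $(\mathbb{F}_\A,\phi)$.

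I do not anticipate a genuine obstacle: this proposition is essentially a repackaging of Lemmas~\ref{stronglyEunitarylemma} and~\ref{maximalelementlemma}, and the only delicate-seeming points are clerical --- confirming that $s_\alpha s_\beta^*$ lies in the canonical form \eqref{SXdef} and invoking antisymmetry of $\leqslant$. (If one also wished to record that $\s_\X$ is $F^*$-inverse, it would suffice to note that the decomposition $g=\alpha\beta^{-1}$ with $\alpha_{|\alpha|}\neq\beta_{|\beta|}$ is unique by uniqueness of reduced words in $\mathbb{F}_\A$, so that $s_\alpha s_\beta^*$ is in fact the unique maximal element above every $t\in\phi^{-1}(g)$.)
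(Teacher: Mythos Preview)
Your proposal is correct and follows essentially the same route as the paper's proof: take the idempotent-pure partial homomorphism $\phi$ from Lemma~\ref{stronglyEunitarylemma}, and for nonempty $\phi^{-1}(g)$ with $g=\alpha\beta^{-1}$ invoke Lemma~\ref{maximalelementlemma} to see that $s_\alpha s_\beta^*$ is maximal there. The paper's argument is a three-line sketch; you have simply filled in the clerical details (that $s_\alpha s_\beta^*=s_\alpha E(\{\epsilon\};\epsilon)s_\beta^*$ lies in $\phi^{-1}(g)$, and the antisymmetry step).
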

\begin{proof}
Let $\phi$ be as defined in the proof of Lemma \ref{stronglyEunitarylemma}. Then if $\phi^{-1}(g)$ is not empty, $g = \alpha\beta^{-1}$ for some $\alpha, \beta\in \A^*$, and as in the proof of Lemma \ref{maximalelementlemma}, $s_\alpha s_\beta^*$ is maximal in $\phi^{-1}(\alpha\beta^{-1})$.
\end{proof}
\section{C*-algebras}

We now turn our attention to the C*-algebras associated to the structures we have defined. The main result of this section, Theorem \ref{mainresult}, states that given a one-sided subshift $\X$, a certain C*-algebra $\OX$ associated to $\X$ is canonically isomorphic to a certain C*-algebra $\Ct(\s_\X)$ associated to the inverse semigroup $\s_\X$. We first recall the construction of $\OX$ due to Matsumoto and Carlsen, and then the construction of $\Ct(S)$ for a general inverse semigroup $S$. Knowledge of C*-algebras is assumed -- one can find undefined terms in the excellent reference \cite{DAV}.
\subsection{The Carlsen-Matsumoto algebra $\OX$}\label{CMalgebrasection}
Let $\X$ be a one-sided subshift over $\A$, and consider $\ell^\infty(\X)$, the C*-algebra of bounded functions on $\X$. Define $\DX$ to be the C*-subalgebra of $\ell^\infty(\X)$ generated by $\{1_{C(\mu, \nu)} \mid \mu, \nu\in\A^*\}$. We can now define the algebra $\OX$. 

\begin{defn}(See \cite[Theorem 10]{CS07}) \label{OXdef}
Let $\X$ be a one-sided subshift over $\A$. Then the {\em Carlsen-Matsumoto algebra} $\OX$ is the universal C*-algebra generated by partial isometries $\{S_\mu\}_{\mu\in \A^*}$ such that
\begin{enumerate}
\item $S_\mu S_\nu = S_{\mu\nu}$ for all $\mu, \nu\in \A^*$, and
\item The map $1_{C(\mu, \nu)}\mapsto S_\nu S_\mu^* S_\mu S_\nu^*$ extends to a $*$-homomorphism from $\DX$ to the C*-algebra generated by $\{S_\mu\mid \mu\in \A^*\}$.
\end{enumerate}
\end{defn}
So $\OX$ is generated by a set of partial isometries $\{S_\mu\}_{\mu\in\A^*}$, and we view $\DX$ as the subalgebra of $\OX$ generated by elements of the form $S_\nu S_\mu^* S_\mu S_\nu^*$. One can show that $\OX$ is unital, with unit $I_{\OX} = I_{\DX} = S_\epsilon$. Furthermore, one can show that the elements $\{S_\mu\}_{\mu\in\A^*}$ satisfy
\begin{equation}\label{cuntz}
\sum_{a\in\A}S_aS_a^* = I_{\OX},
\end{equation}
\begin{equation}\label{comm1}
S_\mu^* S_\mu S_\nu S_\nu^* = S_\nu S_\nu^* S_\mu^* S_\mu,
\end{equation}
\begin{equation}\label{comm2}
S_\mu^* S_\mu S_\nu^* S_\nu = S_\nu^* S_\nu S_\mu^* S_\mu.
\end{equation}
In addition, if $\mu, \nu\in \A^*$ with $|\mu|=|\nu|$, then
\begin{equation}\label{ortho}
S^*_\mu S_\nu \neq 0 \Rightarrow \mu = \nu.
\end{equation}

Since $\DX$ is a commutative C*-algebra, it is isomorphic to $C(\tX)$ for a certain compact Hausdorff space $\tX$. This space was presented as an inverse limit space in \cite[Chapter 2]{CarlsenThesis}, and we reproduce this presentation here because we will use it to establish an isomorphism between $\Ct(\s_\X)$ and $\OX$.

For $x\in \X$ and integer $k\geq 0$, let
\[
\p_k(x)= \{\mu\in \A^*\mid \mu x\in X, |\mu| = k\}
\]
For $l\in \NN$, we say that $x, y\in \X$ are {\em $l$-past equivalent} and write $x\sim_l y$ if $\p_k(x) = \p_k(y)$ for all $k\leq l$. The $l$-past equivalence class of $x\in\X$ will be written as $[x]_l$.

Let $\I = \{ (k, l)\in \NN^2 \mid k \leq l\}$. For every $(k, l)\in \I$ we define another equivalence relation $\kl{k}{l}$ on $\X$ by
\[
x\kl{k}{l}y \Leftrightarrow x_{[1,k]} = y_{[1,k]} \text{ and }\p_r(x_{(k,\infty)}) = \p_r(y_{(k,\infty)})\text{ for all }r\leq l.
\]
We note that there is a typo in \cite[Chapter 2]{CarlsenThesis} where the above is defined with $r=l$ rather than $r\leq l$.\footnote{This was confirmed in private communication with Carlsen.} The equivalence class of $x\in\X$ under $\kl{k}{l}$ will be written as $\klclass{k}{[x]}{l}$, and the set of all such equivalence classes will be written as $\klclass{k}{\X}{l}$. It is clear that for all $(k,l)\in \I$, $\klclass{k}{\X}{l}$ is finite; we endow it with the discrete topology.

There is a partial order on $\I$ which respects this equivalence relation. For $(k_1, l_1), (k_2, l_2)\in\I$ we say 
\[
(k_1, l_1)\leq (k_2, l_2) \Leftrightarrow k_1\leq k_2\text{ and }l_1 - k_1 \leq l_2-k_2.
\]
We note that if $(k,l), (r,s)\in \I$, then they have a common upper bound. Indeed, if $k =r$ then $(k, \max\{l,s\})$ is an upper bound for $(k,l)$ and $(r,s)$, and if $k<r$ then $(r, \max\{l+r-k, s\})$ is an upper bound for $(k,l)$ and $(r,s)$. If $(k_1, l_1)\leq (k_2, l_2)$ then it is straightforward that
\[
x \kl{k_2}{l_2}y \Rightarrow x\kl{k_1}{l_1}y.
\]
Thus, for $(k_1, l_1)\leq (k_2, l_2)$, there is a map $\klclass{(k_1, l_1)}{\pi}{(k_2, l_2)}:\ \klclass{k_2}{\X}{l_2}\to \ \klclass{k_1}{\X}{l_1}$ such that $$\klclass{(k_1, l_1)}{\pi}{(k_2, l_2)}(\klclass{k_2}{[x]}{l_2}) =\  \klclass{k_1}{[x]}{l_1}$$
One can then form the inverse limit 
\begin{eqnarray}
\tX &=& \lim_{(k, j)\in \I}(\ \klclass{k}{\X}{l}, \pi)\label{tXdef}\\
 &=& \left\{ (\klclass{k}{[\klclass{k}{x}{l}]}{l})_{(k,l)\in\I} \in \prod_{(k,l)\in\I}\ \klclass{k}{\X}{l}\mid (k_1, l_1)\leq (k_2, l_2) \Rightarrow\ \klclass{k_1}{[\klclass{k_2}{x}{l_2}]}{l_1} =\ \klclass{k_1}{[\klclass{k_1}{x}{l_1}]}{l_1}\right\}\nonumber
\end{eqnarray}
Which is a closed subspace of the space $\prod_{(k,l)\in\I}\ \klclass{k}{\X}{l}$ when given the product topology of the discrete topologies.

Let $x\in \X$ and take $(k,l)\in \I$. The set
\[
U(x, k, l) = \{(\klclass{r}{[\klclass{r}{x}{s}]}{s})_{(r,s)\in\I} \in \tX \mid\ \klclass{k}{[\klclass{k}{x}{l}]}{l} =\ \klclass{k}{[x]}{l}\}
\]
is open and closed. Sets of this form generate the topology on $\tX$.

We have the following lemma about the relation $\kl{k}{l}$.

\begin{lem}\label{eqclasscontain}
Let $\X$ be a subshift, let $v\in \A^*$, let $F\fs \A^*$, and let
\[
k = |v|, \hspace{1cm} l = \max\{|f|, |v| :  f\in F\}.
\]
Then for all $x\in \X$, and all $(r,s)\geq (k,l)$, either $\klclass{r}{[x]}{s} \subset C(F;v)$ or $\klclass{r}{[x]}{s} \cap C(F; v) = \emptyset$.
\end{lem}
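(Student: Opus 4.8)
The plan is to show that membership of an equivalence class $\klclass{r}{[x]}{s}$ in the set $C(F;v)$ depends only on the data recorded by the relation $\kl{r}{s}$, provided $(r,s)\geq (k,l)$ where $k$ and $l$ are as in the statement. Concretely, I would argue that if $y\kl{r}{s}x$, then $y\in C(F;v)$ if and only if $x\in C(F;v)$; from this the dichotomy in the lemma is immediate, since then the class $\klclass{r}{[x]}{s}$ is either entirely inside $C(F;v)$ (if $x\in C(F;v)$) or entirely outside it (if $x\notin C(F;v)$).

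To set up the equivalence, recall that by definition $z\in C(F;v)$ means $z = vz'$ for some $z'$ (equivalently $v$ is a prefix of $z$, i.e. $z_{[1,k]}=v$ since $|v|=k$) and $f z_{(k,\infty)}\in\X$ for every $f\in F$; the latter says precisely that $F\subset \p_{|f|}(z_{(k,\infty)})$ for each $f$, or more uniformly that $f\in\p_{|f|}(z_{(k,\infty)})$ for all $f\in F$. Now suppose $(r,s)\geq(k,l)$, so $r\geq k$ and $s-r\geq l-k$, and suppose $y\kl{r}{s}x$. By definition of $\kl{r}{s}$ this gives $x_{[1,r]}=y_{[1,r]}$ and $\p_q(x_{(r,\infty)})=\p_q(y_{(r,\infty)})$ for all $q\leq s$. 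First, since $r\geq k$, the condition $x_{[1,r]}=y_{[1,r]}$ forces $x_{[1,k]}=y_{[1,k]}$, so $x$ and $y$ agree on whether $v$ is a prefix. Second, I need that the sets $\p_q(x_{(k,\infty)})$ and $\p_q(y_{(k,\infty)})$ agree for all $q\leq l-k$; this is the step that uses the relation between prefixes of different lengths. The key observation is that because $x_{[1,r]}=y_{[1,r]}$, one can translate past-set information at level $r$ down to level $k$: for $q\leq l-k\leq s-r$, an element $w\in\A^q$ lies in $\p_q(x_{(k,\infty)})$ iff $w\,x_{(k,k+q]}\cdots$— more carefully, $\p_q(x_{(k,\infty)})$ can be recovered from $\p_{q + (r-k)}(x_{(r,\infty)})$ together with the word $x_{(k,r]}$ (which equals $y_{(k,r]}$), because a word of length $q$ extends $x_{(k,\infty)}$ into $\X$ iff its concatenation with the fixed block $x_{(k,r]}$, of total length $q+(r-k)$, extends $x_{(r,\infty)}$ into $\X$. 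Since $q+(r-k)\leq (l-k)+(s-l) = s-k \leq s$ wait — I should check $q+(r-k)\leq s$: we have $q\leq l-k$ and $l-k\leq s-r$, so $q+(r-k)\leq (l-k)+(r-k)\leq (s-r)+(r-k)=s-k\leq s$, so this level is within range and $\p_{q+(r-k)}(x_{(r,\infty)})=\p_{q+(r-k)}(y_{(r,\infty)})$ by hypothesis. Combining, $\p_q(x_{(k,\infty)})=\p_q(y_{(k,\infty)})$ for all $q\leq l-k$.

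Finally I would package this: every $f\in F$ has $|f|\leq l$, and the condition $f\in\p_{|f|}(x_{(k,\infty)})$ only involves $\p_q(x_{(k,\infty)})$ for $q=|f|\leq l = k+(l-k)$, hmm — here I want $|f|\le l-k$ for the argument above to apply directly, but $|f|$ could be as large as $l$. This is the point to be careful about: actually the relevant prefix set is indexed by the full length $|f|\le l$, and $x_{(k,\infty)}$ is being prefixed, so I should instead track $\p_{|f|}(x_{(k,\infty)})$ directly. Re-examining: with $r\ge k$ and $s\ge l$, for $|f|\le l\le s$, one has (using $x_{[1,r]}=y_{[1,r]}$, in particular $x_{(k,r]}=y_{(k,r]}$) that $f\in\p_{|f|}(x_{(k,\infty)})$ iff $f\cdot x_{(k,r]}$, of length $|f|+(r-k)\le l+(s-l)$… I need $|f|+(r-k)\le s$, which holds since $|f|\le l\le s$ and $r-k\le s-l$, giving $|f|+(r-k)\le l+(s-l)=s$; hence whether $f\cdot x_{(k,r]}\in\p_{|f|+(r-k)}(x_{(r,\infty)})$ is determined by $\p_{|f|+(r-k)}(x_{(r,\infty)})=\p_{|f|+(r-k)}(y_{(r,\infty)})$, which equals the same quantity for $y$. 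So indeed $f\in\p_{|f|}(x_{(k,\infty)})$ iff $f\in\p_{|f|}(y_{(k,\infty)})$, for every $f\in F$. Together with $x_{[1,k]}=y_{[1,k]}$, this shows $x\in C(F;v)\iff y\in C(F;v)$, completing the argument.

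I expect the main obstacle to be exactly the bookkeeping in that middle step: correctly relating the past-sets $\p_q(x_{(k,\infty)})$ to past-sets at level $r$ via the shared block $x_{(k,r]}$, and verifying that all the indices that arise stay $\le s$ so that the hypothesis $\p_q(x_{(r,\infty)})=\p_q(y_{(r,\infty)})$ for $q\le s$ actually applies. The inequality $s-r\ge l-k$ from $(r,s)\ge(k,l)$ is precisely what makes these index checks go through, and I would make sure to invoke it explicitly. The prefix-comparison part ($x_{[1,k]}=y_{[1,k]}$) is routine once $r\ge k$ is used.
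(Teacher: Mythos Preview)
Your argument is correct and rests on the same observation as the paper's proof: membership in $C(F;v)$ depends only on the first $k=|v|$ letters and the past sets $\p_q$ of the tail for $q\le l$, data that is constant on any $\kl{r}{s}$-class with $(r,s)\ge(k,l)$. The paper streamlines the bookkeeping you wrestle with by first noting that $\klclass{r}{[x]}{s}\subset\,\klclass{k}{[x]}{l}$ whenever $(r,s)\ge(k,l)$, so it suffices to prove the dichotomy for $(r,s)=(k,l)$; at that level the index translation you carry out (relating $\p_q(x_{(k,\infty)})$ to $\p_{q+(r-k)}(x_{(r,\infty)})$) becomes unnecessary, and the argument reduces to a one-line check.
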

\begin{proof}
Since $(r,s)\geq (k,l)$ implies that $\klclass{r}{[x]}{s} \subset$ $\klclass{k}{[x]}{l}$, we need only prove the statement for $r=k$ and $s = l$. Suppose that we have $\klclass{k}{[x]}{l} \not\subset C(F;v)$, and take $y\in\ \klclass{k}{[x]}{l} \setminus C(F;v)$. If $v$ is not a prefix of $y$, then this is true of all elements of $\klclass{k}{[x]}{l}$ and we are done. So, suppose that $y = vy'$. There must be an element $f\in F$ such that $fy'\notin \X$. If we have some other element $z = vz'\in\ \klclass{k}{[x]}{l}$, we must have that $\p_{|f|}(z') = \p_{|f|}(y')$, and so $fz'\notin \X$. This implies that $z\notin C(F;v)$ and we are done.
\end{proof}
\subsection{$\OX$ as the tight C*-algebra of $\s_\X$}
In this section we recall the definition of the tight C*-algebra of an inverse semigroup from \cite{Ex08}. We then show that the tight C*-algebra of $\s_\X$ is isomorphic to $\OX$.

Let $S$ be an inverse semigroup with 0, and let $A$ be a C*-algebra. A map $\pi: S\to A$ is called a {\em representation} of $S$ if $\pi(0) = 0$, $\pi(st) = \pi(s)\pi(t)$ and $\pi(s)^* = \pi(s^*)$ for all $s, t\in S$.

We are interested in a certain class of representations which we will now describe. For $F\subset Z\subset E(S)$, we say that $F$ {\em covers} $Z$ if for every $z\in Z$, there exists $f\in F$ such that $fz \neq 0$. If $F$ covers $\{y\in E(S)\mid y\leqslant x\}$, we say that $F$ covers $x$. 

Let $X,Y\fs E(S)$, and let
\[
E(S)^{X, Y} = \{e\in E(S)\mid e\leqslant x \text{ for all }x\in X, ey = 0 \text{ for all }y\in Y\}.
\]
A representation $\pi: S\to A$ with $A$ unital is said to be {\em tight} if whenever $X, Y, Z\fs E(S)$ such that $Z$ is a cover of $E(S)^{X,Y}$, then
\[
\bigvee_{z\in Z}\pi(z) = \prod_{x\in X}\pi(x)\prod_{y\in Y}(1-\pi(y)).
\]

The {\em tight C*-algebra of $S$}, denoted $\Ct(S)$, is the universal C*-algebra generated by one element for each element of $S$ subject to the relations which say that the standard map $\pi_t: S\to \Ct(S)$ is a tight representation.

At this point it is not clear that $\Ct(S)$ exists, but it was explicitly constructed in \cite{Ex08} as a groupoid C*-algebra associated to an action of $S$ on a certain space $\Et(S)$ associated to $S$. We do not go into specifics about inverse semigroup actions or groupoids here, though we will define $\Et(S)$ as it is essential for establishing isomorphism we desire. 

Recall that the natural partial order on $S$, when restricted to $E(S)$, takes on a simpler form: $e\leqslant f \Leftrightarrow ef = e$. A subset $\xi\subset E(S)$ is called a {\em filter} if it does not contain the zero element, is closed under products, and is ``upwards directed'', which is to say that if $e\in \xi$ and $e\leqslant f$ then $f\in \xi$. A filter is called an {\em ultrafilter} if it is not properly contained in any other filter. The set of filters is denoted $\Ef(S)$, and the set of ultrafilters is denoted $\Eu(S)$. 

The set $\Ef(S)$ may be viewed as a subset of the product space $\{0,1\}^{E(S)}$. We let $\Ef(S)$ inherit the subspace topology from the product topology (with $\{0,1\}$ given the discrete topology). For $e\in E(S)$, let
\[
D_e = \{\xi\in \Ef(S)\mid e\in \xi\}.
\]
Then sets of this form together with their complements form a subbasis for the topology on $\Ef(S)$. With this topology, $\Ef(S)$ is called the {\em spectrum} of $E(S)$. We also let $\Et(S) = \overline{\Eu(S)}$, and call this the {\em tight spectrum} $E(S)$. We will shorten $D_e\cap \Et(S)$ to $D_e^t$.

 It is a fact that $\Ct(S)$ exists and that the C*-subalgebra of $\Ct(S)$ generated by $\pi_t(E(S))$ is $*$-isomorphic to $C(\Et)$ via the identification $\pi_t(e) \mapsto 1_{D_e^t}$.

Now, we take $\X$ to be a one-sided subshift over $\A$, and describe $\Et(\s_\X)$.
\begin{lem}
If $\xi\subset E_\X$ is a filter, then there exists $x\in \X \cup \A^*$ such that if $E(F;v)\in \xi$, then $v$ is a prefix of $x$.
\end{lem}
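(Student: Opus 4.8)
The plan is to read off from $\xi$ the set of ``prefix data''
\[
V=\{v\in\A^*\mid E(F;v)\in\xi\ \text{for some}\ F\fs\A^*\},
\]
to show that $V$ is totally ordered by the prefix relation, and then to take $x$ to be the longest word of $V$ when $V$ is finite and the ``limit'' of $V$ when $V$ is infinite.

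First I would show that any two elements of $V$ are prefix-comparable. If $v,w\in V$, choose $E(F;v),E(G;w)\in\xi$. Since a filter is closed under products and does not contain $0=\emptyset$, we get $E(F;v)E(G;w)\neq 0$, which (as recorded in the paragraph just before Lemma~\ref{idempotentlemma}) is equivalent to $C(F;v)\cap C(G;w)\neq\emptyset$. Because $C(F;v)\subset C_\X(v)\subset C(v)$ and $C(G;w)\subset C(w)$, the cylinders $C(v)$ and $C(w)$ meet, which forces one of $v,w$ to be a prefix of the other. Hence $V$ is a chain under the prefix order; in particular the length map is injective on $V$. If $V$ is empty or finite, it has a prefix-largest element $x\in\A^*$ (take $x=\epsilon$ if $V=\emptyset$), and every $v\in V$ is a prefix of $x$, so we are done. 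If $V$ is infinite, injectivity of the length map makes the lengths unbounded, so enumerate $V=\{v^{(1)},v^{(2)},\dots\}$ with $|v^{(1)}|<|v^{(2)}|<\cdots\to\infty$ and $v^{(n)}$ a prefix of $v^{(n+1)}$; define $x\in\A^\NN$ by $x_{[1,|v^{(n)}|]}=v^{(n)}$ for all $n$, which is well defined by compatibility, and then every $v\in V$ is a prefix of $x$. To see $x\in\X$: for each $n$ pick $F_n$ with $E(F_n;v^{(n)})\in\xi$; as $\xi$ omits $0$ we have $C(F_n;v^{(n)})\neq\emptyset$, hence $C_\X(v^{(n)})\neq\emptyset$, so choose $y^{(n)}\in C_\X(v^{(n)})\subset\X$. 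Then $y^{(n)}$ agrees with $x$ on its first $|v^{(n)}|$ coordinates, so $y^{(n)}\to x$ in $\A^\NN$, and $\X$ closed gives $x\in\X$.

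The only real content is the first step -- converting ``$\xi$ is product-closed and zero-free'' into the combinatorial fact that the second coordinates of its elements form a prefix-chain -- and that is immediate from the equivalence $E(F;v)E(G;w)\neq0\iff C(F;v)\cap C(G;w)\neq\emptyset$ together with the elementary property of cylinder sets. The remainder is the standard ``a closed subshift contains the limit of an increasing sequence of admissible words'' argument, so I do not anticipate any serious obstacle.
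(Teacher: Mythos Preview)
Your proof is correct and follows essentially the same approach as the paper: both reduce to showing that the second coordinates $v$ appearing in $\xi$ form a prefix-chain because the product $E(F;v)E(G;w)$ vanishes unless one of $v,w$ is a prefix of the other. The paper's proof simply invokes Lemma~\ref{idempotentlemma} for that fact and then says ``the result follows,'' whereas you spell out the construction of $x$ (longest word in the finite case, limit in the infinite case) and verify $x\in\X$ via closedness---details the paper omits but which are exactly what one would fill in.
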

\begin{proof}
If $\xi$ is a filter and $E(G;v), E(H, w)\in\xi$, then Lemma \ref{idempotentlemma} shows that their product is zero unless $w$ is a prefix of $v$ or vice-versa. The result follows.
\end{proof}
\begin{lem}
Let $\X$ be a one-sided subshift over $\A$, and let
\[
\eta_x = \{E(F; v)\in E_\X\mid x\in C(F; v)\}.
\]
Then $\Eu = \{\eta_x\mid x\in \X\}$.
\end{lem}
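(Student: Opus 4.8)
The plan is to prove $\Eu(\s_\X) = \{\eta_x \mid x \in \X\}$ by establishing three things: each $\eta_x$ is a filter, each $\eta_x$ is an ultrafilter, and every ultrafilter is of this form. First I would check that $\eta_x$ is a filter: it does not contain $0 = \emptyset$ since $x \notin \emptyset$; it is closed under products because $E(F;v)E(G;w)$ is (by Lemma \ref{idempotentlemma}) again of the form $E(H;u)$ with $C(H;u) = C(F;v) \cap C(G;w)$, so $x$ lies in it whenever $x$ lies in both factors; and it is upward closed because $E(F;v) \leqslant E(G;w)$ means $C(F;v) \subseteq C(G;w)$, so $x \in C(F;v)$ forces $x \in C(G;w)$. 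Here I would want to record the elementary fact that for idempotents in $\s_\X$, $e \leqslant f$ corresponds exactly to containment of the associated closed subsets of $\X$.

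Next I would show $\eta_x$ is an ultrafilter. Suppose $\xi$ is a filter with $\eta_x \subsetneq \xi$, and pick $E(G;w) \in \xi \setminus \eta_x$, so $x \notin C(G;w)$. The key point is that $E(C_\X(x_{[1,k]})\text{-data}) $ — more precisely the idempotents $E(\{\epsilon\}; x_{[1,k]})$ and, for each prefix obstruction, suitable $E(F;v)$ — all lie in $\eta_x$, hence in $\xi$; multiplying $E(G;w)$ against an appropriate such element of $\eta_x$ must be nonzero since $\xi$ is a filter, yet I can arrange (using that $x \notin C(G;w)$, so either $w$ is not a prefix of $x$, or $w = x_{[1,|w|]}$ but some $f \in G$ fails $f x_{(|w|,\infty)} \in \X$) that the product is $0$ — a contradiction. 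Concretely, if $w$ is not a prefix of $x$, then $E(\{\epsilon\}; x_{[1,|w|]}) \in \eta_x$ has product $0$ with $E(G;w)$; if $w = x_{[1,|w|]}$ and $f x_{(|w|,\infty)} \notin \X$ for some $f \in G$, then a cylinder-type idempotent at level $|x_{(|w|,\infty)}|$ large enough separates $x$ from $C(G;w)$. This uses only Lemma \ref{idempotentlemma} and the definition of $C(F;v)$.

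Finally, for the reverse inclusion, let $\xi$ be an arbitrary ultrafilter in $E_\X$. By the preceding lemma there is $x \in \X \cup \A^*$ such that every $E(F;v) \in \xi$ has $v$ a prefix of $x$. I would first argue $x \in \X$ (not merely in $\A^*$): the identity $E(\{\epsilon\};\epsilon)$ lies in $\xi$, and by upward-directedness and the fact that $\xi$ is an ultrafilter one can keep finding longer-and-longer prefixes — for each $n$, among the finitely many letters $a$ the idempotents $E(\{\epsilon\}; x_{[1,n]}a)$ partition $C_\X(x_{[1,n]})$, and ultrafilter-ness forces one of them into $\xi$, so $x$ has prefixes of every length and hence is an infinite word; that it lies in $\X$ follows since each $C_\X(x_{[1,n]}) \neq \emptyset$ and $\X$ is closed. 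Then $\xi \subseteq \eta_x$: if $E(F;v) \in \xi$ then $v$ is a prefix of $x$, and I must check $x \in C(F;v)$, i.e. $f x_{(|v|,\infty)} \in \X$ for each $f \in F$; if not, separating $x$ from $C(F;v)$ as in the previous paragraph produces an idempotent in $\eta_x$ disjoint from $E(F;v)$, but one shows such an idempotent must be in $\xi$ (again via the ultrafilter property and the finite partition into cylinders), contradicting $E(F;v)E(\cdot) = 0$. Since $\eta_x$ is a filter and $\xi$ is an ultrafilter, $\xi \subseteq \eta_x$ forces $\xi = \eta_x$.

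The main obstacle I anticipate is the bookkeeping in the maximality argument (the second and third paragraphs): showing that if $x \notin C(F;v)$ then some idempotent of $\eta_x$ genuinely annihilates $E(F;v)$, and dually that the ultrafilter $\xi$ is forced to contain the ``long cylinder'' idempotents at $x$. This rests on the combinatorial fact — essentially a consequence of Lemma \ref{eqclasscontain} and Lemma \ref{idempotentlemma} — that the finitely many idempotents $E(\{\epsilon\}; x_{[1,n]}a)$, $a \in \A$, together with the ``forbidden-past'' refinements cover $C_\X(x_{[1,n]})$, so that an ultrafilter containing $E(\{\epsilon\};x_{[1,n]})$ must contain one of the pieces; getting the indexing of $F$, $v$ and the separating level right is where care is needed, but no deep idea beyond the already-established lemmas is required.
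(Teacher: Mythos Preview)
Your argument is correct, but the paper takes a noticeably shorter topological route in both nontrivial steps. For maximality of $\eta_x$, rather than producing an explicit annihilating idempotent when $x\notin C(G;w)$, the paper argues the contrapositive via limits: if $E(F;v)$ meets every element of $\eta_x$ nontrivially, then in particular $C(F;v)\cap C(x_{[1,n]})\neq\emptyset$ for every $n$, so one picks $y_n$ in this intersection, observes $y_n\to x$, and concludes $x\in C(F;v)$ by closedness of $C(F;v)$. Your version (pass to the open complement and find a small enough cylinder around $x$) is the same fact read the other way, so nothing is lost, but the limit formulation avoids the case split on whether $w$ is a prefix of $x$. For the reverse inclusion, the paper bypasses entirely the inductive ``an ultrafilter must swallow one piece of a finite cover'' argument and the verification that $x$ is infinite: it simply notes that $\{C(F;v):E(F;v)\in\xi\}$ is a family of closed subsets of the compact space $\X$ with the finite intersection property, takes any $x$ in the (nonempty) intersection, and gets $\xi\subset\eta_x$ immediately; maximality of $\xi$ then gives equality. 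Your combinatorial construction of $x$ letter by letter works, but the compactness/FIP argument is a couple of lines and needs no bookkeeping. One small notational slip: ``$|x_{(|w|,\infty)}|$'' does not make sense for an infinite tail; you mean a cylinder $C(x_{[1,n]})$ with $n$ large.
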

\begin{proof}
First, we show that $\eta_x$ is a filter. If $E(F; v), E(G, w)\in \eta_x$, then $C(F;v)\cap C(G;w) = C(H; z)\neq \emptyset$ for some $H\fs \A^*$ and $z\in \A^*$. Hence $E(F;v)E(G;w) = E(H;z)$, and so $\eta_x$ is closed under products. It is clear that $\eta_x$ does not contain the zero element and is upwards closed, so it is a filter.

Now, suppose that we have $E(F;v)$ such that $E(F;v)E(G;w)\neq 0$ for all $E(G;w)\in \eta_x$. Thus for each $n\geq 0$, we can find $y_n \in C(F;v)\cap C(x_1\cdots x_n)$, and it is clear that the $y_n$ converge to $x$ in $\X$. Since $C(F;v)$ is closed in $\X$, we must have that $x\in C(F;v)$, and so $E(F;v)\in \eta_x$. This shows that $\eta_x$ is an ultrafilter.

Now, suppose that $\xi\subset E_\X$ is an ultrafilter. Then $\{C(F;v)\mid E(F;v)\in \xi\}$ is a collection of closed subsets of the compact space $\X$ which has the finite intersection property, and so the intersection
\[
\bigcap_{E(F;v)\in \xi}C(F;v)
\]
is nonempty. Take $x$ in the above intersection. Then we must have that $\xi\subset \eta_x$, and since $\xi$ is assumed to be an ultrafilter, $\xi = \eta_x$.
\end{proof}

We now return to the space $\tX$ from \eqref{tXdef} which is the spectrum of the commutative C*-algebra $\DX$. Our next proposition will establish a natural homeomorphism between $\tX$ and $\Et(\s_\X)$.
\begin{prop}\label{xttight}
The map $\theta: \tX \to \Ef(\s_\X)$ defined by
\begin{equation}\label{thetadef}
\theta\left((\klclass{k}{[\klclass{k}{x}{l}]}{l})_{(k,l)\in\I}\right) = \{E(F;v)\in E_\X\mid\ \klclass{k}{[\klclass{k}{x}{l}]}{l}\subset C(F;v)\text{ for some }(k,l)\in\I\}
\end{equation}
is continuous, injective, and $\theta(\tX) = \Et(\s_\X)$. Hence, it is a homeomorphism from $\tX$ to $\Et(\s_\X)$.
\end{prop}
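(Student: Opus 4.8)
The plan is to base everything on a single description of when $E(F;v)$ lies in $\theta(\zeta)$, in terms of one coordinate of $\zeta$, extracted from Lemma~\ref{eqclasscontain}. Write $\zeta_{(k,l)}\in\klclass{k}{\X}{l}$ for the $(k,l)$-coordinate of $\zeta\in\tX$, viewed also as the corresponding $\kl{k}{l}$-class, i.e.\ a nonempty subset of $\X$; recall $\zeta_{(k,l)}\supseteq\zeta_{(k',l')}$ when $(k,l)\le(k',l')$. For $E(F;v)\in E_\X$, let $l_0(F,v)=\max\{|f|,|v|:f\in F\}$, so that $(|v|,l_0(F,v))$ is exactly the threshold appearing in Lemma~\ref{eqclasscontain} for the pair $(F,v)$. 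The first thing I would prove is the equivalence
\[
E(F;v)\in\theta(\zeta)\iff\zeta_{(k,l)}\subseteq C(F;v)\ \text{for some (equivalently every) }(k,l)\in\I\text{ with }(k,l)\ge(|v|,l_0(F,v)).
\]
Here ``$\Leftarrow$'' is immediate from the definition of $\theta$, and for ``$\Rightarrow$'' one takes a level $(k',l')$ in $\I$ above both a witnessing level and $(|v|,l_0(F,v))$ (using that $\I$ is directed), notes $\emptyset\ne\zeta_{(k',l')}\subseteq\zeta_{(k,l)}\cap C(F;v)$ for the target level $(k,l)$, and applies the dichotomy of Lemma~\ref{eqclasscontain} there.

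Granting this, the first two assertions are short. For well-definedness, $\theta(\zeta)$ contains the identity $E(\{\epsilon\};\epsilon)$, omits $0$ (since $\zeta_{(k,l)}\subseteq C(F;v)$ forces $C(F;v)\ne\emptyset$), is clearly upwards closed, and is closed under products because a common upper bound of two witnessing levels gives $\zeta_{(k,l)}\subseteq C(F;v)\cap C(G;w)=C(H;z)$ with $E(F;v)E(G;w)=E(H;z)$ by Lemma~\ref{idempotentlemma}; so $\theta(\zeta)\in\Ef(\s_\X)$. For continuity, the equivalence above shows $\theta^{-1}(D_{E(F;v)})$ equals the set of $\zeta$ whose coordinate at a fixed level $\ge(|v|,l_0(F,v))$ is a class contained in $C(F;v)$, hence is a finite union of clopen ``coordinate'' sets and is clopen; since the $D_e$ together with their complements form a subbasis of $\Ef(\s_\X)$, $\theta$ is continuous.

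For injectivity I would show each $\zeta_{(k,l)}$ is recovered from $\theta(\zeta)$: the common length-$k$ prefix $v$ of $\zeta_{(k,l)}$ is the unique length-$k$ word with $E(\{\epsilon\};v)\in\theta(\zeta)$, and then for each word $\mu$ with $1\le|\mu|\le l$, Lemma~\ref{eqclasscontain} for $(\{\mu\},v)$ makes $\zeta_{(k,l)}$ either contained in or disjoint from $C(\{\mu\};v)$, the first case happening (by the equivalence) exactly when $E(\{\mu\};v)\in\theta(\zeta)$; as $x\in C(\{\mu\};v)$ says $\mu\in\p_{|\mu|}(x_{(k,\infty)})$, this pins down both $v$ and the whole past profile $(\p_r(x_{(k,\infty)}))_{r\le l}$, hence $\zeta_{(k,l)}$ itself. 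For the image, I would use that the ultrafilters are exactly the $\eta_x$ (the preceding lemma): the point $\zeta_x=(\klclass{k}{[x]}{l})_{(k,l)}\in\tX$ satisfies $\theta(\zeta_x)=\{E(F;v):x\in C(F;v)\}=\eta_x$ (again a direct check with Lemma~\ref{eqclasscontain}), so $\Eu(\s_\X)\subseteq\theta(\tX)$; since $\theta$ is continuous, $\tX$ compact and $\Ef(\s_\X)$ Hausdorff, $\theta(\tX)$ is closed, so $\theta(\tX)\supseteq\overline{\Eu(\s_\X)}=\Et(\s_\X)$. For the reverse inclusion, given $\zeta$ and a basic neighbourhood of $\theta(\zeta)$ carved out by finitely many conditions $E(F_i;v_i)\in\cdot$ and $E(G_j;w_j)\notin\cdot$, the equivalence (and, for the $G_j$, the disjointness half of Lemma~\ref{eqclasscontain}) yields $(k^*,l^*)\in\I$ with $\zeta_{(k^*,l^*)}\subseteq\bigcap_iC(F_i;v_i)\setminus\bigcup_jC(G_j;w_j)$, and any $x$ in this nonempty set has $\eta_x$ in the neighbourhood; hence $\theta(\zeta)\in\overline{\Eu(\s_\X)}=\Et(\s_\X)$. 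Thus $\theta(\tX)=\Et(\s_\X)$, and a continuous bijection from a compact space onto a Hausdorff space is a homeomorphism.

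The real content, I expect, is the injectivity step, and the thing to watch there is the very phenomenon flagged in the remark comparing $\s_\X$ with the Boava--de Castro--Mortari construction: filters of $E_\X$ do not see complements of the $C(v,w)$, yet a $\kl{k}{l}$-class is assembled from cylinder-type sets using complements, so a priori $\theta(\zeta)$ need not remember the class. What saves the argument is that Lemma~\ref{eqclasscontain} delivers, past the threshold level, genuine \emph{disjointness} of a class from $C(\{\mu\};v)$ rather than mere non-containment, so ``$\mu\notin\p_{|\mu|}$'' is faithfully recorded by the honest non-membership $E(\{\mu\};v)\notin\theta(\zeta)$. A secondary caution is that the classes $\zeta_{(k,l)}$ need not be closed in $\X$, so one should resist running finite-intersection/compactness arguments on them inside $\X$; all the intersecting is done at the level of single coordinates, via the equivalence and the directedness of $\I$.
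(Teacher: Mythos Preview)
Your proof is correct and rests on the same core input as the paper—Lemma~\ref{eqclasscontain} and the directedness of $\I$—but it is organized differently and in two places takes a genuinely different route. By isolating up front the equivalence ``$E(F;v)\in\theta(\zeta)$ iff $\zeta_{(k,l)}\subseteq C(F;v)$ at any single level above the threshold $(|v|,l_0(F,v))$'', you make continuity immediate (the preimage of $D_{E(F;v)}$ is a clopen coordinate set at one fixed level), whereas the paper verifies openness of $\theta^{-1}(D_{E(F;v)})$ and of its complement separately. Your injectivity argument \emph{reconstructs} each coordinate $\zeta_{(k,l)}$ from $\theta(\zeta)$ by reading off which $E(\{\mu\};v)$ lie in the filter; the paper instead argues by contrapositive, producing from two distinct points a single distinguishing idempotent $E(w,v)$. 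Both are valid and use the same dichotomy from Lemma~\ref{eqclasscontain}; your version makes it more transparent that $\theta$ is in fact a bijection onto its image. For the inclusion $\theta(\tX)\subseteq\Et(\s_\X)$, the paper appeals to density of $\{\tilde x:x\in\X\}$ in $\tX$ together with continuity (via a terse invocation of second countability), while you give an explicit approximation of $\theta(\zeta)$ by ultrafilters $\eta_x$ on each subbasic neighbourhood; this is more self-contained. Your closing remark about non-closedness of the classes $\zeta_{(k,l)}$ is apt but, as you observe, never actually bites.
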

\begin{proof}
First we show that $\theta$ is well-defined. Take $(\klclass{k}{[\klclass{k}{x}{l}]}{l})_{(k,l)\in\I}\in \tX$ and consider its image under $\theta$ -- it is clearly upwards closed and does not contain the zero element. To prove closure under products, suppose we have $(r,s), (t, u)\in \I$ and $E(F; v), E(G; w)\in E_\X$ such that $\klclass{r}{[\klclass{r}{x}{s}]}{s} \subset E(F; v)$ and $\klclass{t}{[\klclass{t}{x}{u}]}{u} \subset E(G; w)$. Let $(k,l)$ be an upper bound for $(r, s), (t, u)$ in $\I$. Then $\klclass{k}{[\klclass{k}{x}{l}]}{l}$ is a subset of both $\klclass{r}{[\klclass{r}{x}{s}]}{s}$ and $\klclass{t}{[\klclass{t}{x}{u}]}{u}$, and so is contained in both $C(F;v)$ and $C(G;w)$. If $E(F;v)E(G;w) = E(H;z)$, then $\klclass{k}{[\klclass{k}{x}{l}]}{l}\subset C(H;z)$, and so $\theta((\klclass{k}{[\klclass{k}{x}{l}]}{l})_{(k,l)\in\I})$ is a filter.

We now show that $\theta$ is injective. Suppose we have $x, y\in \tX$ and that $x\neq y$. Then there must exist $(k,l)\in\I$ such that $\klclass{k}{[\klclass{k}{x}{l}]}{l} \neq \klclass{k}{[\klclass{k}{y}{l}]}{l}$. If $(\klclass{k}{x}{l})_{[1,k]} \neq (\klclass{k}{y}{l})_{[1,k]}$, then $$E\left(\bigcup_{r\leq l}\p_r((\klclass{k}{x}{l})_{(k,\infty)}); \ (\klclass{k}{x}{l})_{[1,k]}\right)\in \theta(x)$$ $$E\left(\bigcup_{r\leq l}\p_r((\klclass{k}{y}{l})_{(k,\infty)}); \ (\klclass{k}{y}{l})_{[1,k]}\right)\in \theta(y).$$
The product of these two elements is zero, so $\theta(x)\neq \theta(y)$.

So, we instead suppose that there exists $v\in \A^*$ with $|v| = k$ and $\klclass{k}{x}{l} = vx'$, $\klclass{k}{y}{l} = vy'$, so that
\[
\klclass{k}{[vx']}{l}\neq\ \klclass{k}{[vy']}{l}.
\]
Without loss of generality, there must exist $w\in \A^*$ with $|w|\leq l$ such that $wx'\in\X$ and $wy'\notin \X$. Hence, $vx'\in C(w,v)$, and $vy'\notin C(w,v)$. Thus by Lemma \ref{eqclasscontain}, we must have that $\klclass{k}{[vx']}{l} \subset C(w,v)$ and $\klclass{k}{[vy']}{l}\cap C(w,v) = \emptyset$. Similar to above, this implies that $E(w,v)\in \theta(x)$ and $E(w,v)\notin \theta(y)$. Hence $\theta(x)\neq\theta(y)$, and $\theta$ is injective.

Now, we prove that $\theta$ is continuous. Take $E(F;v)\in E_\X$, and as before take $D_{E(F;v)} = \{\xi\in \Ef(\s_\X)\mid E(F;v)\in \xi\}$. Then
\[
\theta^{-1}(D_{E(F;v)}) = \{(\klclass{k}{[\klclass{k}{x}{l}]}{l})_{(k,l)\in\I}\in \tX\mid\ \klclass{r}{[\klclass{r}{x}{s}]}{s} \subset C(F;v)\text{ for some }(r,s)\in\I\}.
\]
If $(\klclass{k}{[\klclass{k}{x}{l}]}{l})_{(k,l)\in\I}\in \theta^{-1}(D_{E(F;v)})$, find $(r,s)\in\I$ such that $\klclass{r}{[\klclass{r}{x}{s}]}{s} \subset C(F;v)$. Then if $(\klclass{k}{[\klclass{k}{y}{l}]}{l})_{(k,l)\in\I}\in U(\klclass{r}{x}{s},r,s)$, $\klclass{r}{[\klclass{r}{y}{s}]}{s} =\ \klclass{r}{[\klclass{r}{x}{s}]}{s} \subset C(F;v)$, and so $U(\klclass{r}{x}{s},r,s)\subset \theta^{-1}(D_{E(F;v)})$.

On the other hand,
\[
\theta^{-1}((D_{E(F;v)})^c) = \{(\klclass{k}{[\klclass{k}{x}{l}]}{l})_{(k,l)\in\I}\in \tX\mid\ \klclass{r}{[\klclass{r}{x}{s}]}{s} \not\subset C(F;v)\text{ for all }(r,s)\in\I\}.
\]
Take $(\klclass{k}{[\klclass{k}{x}{l}]}{l})_{(k,l)\in\I}\in \theta^{-1}((D_{E(F;v)})^c)$, take $k = |v|$, and take $l =  \max\{|f|, |v| :  f\in F\}$. Then $\klclass{k}{[\klclass{k}{x}{l}]}{l}\cap C(F;v) = \emptyset$, and so $U(\klclass{k}{x}{l}, k, l)\subset \theta^{-1}((D_{E(F;v)})^c)$. The collection of all sets of the form $D_{E(F;v)}$ together with those of the form $(D_{E(G;w)})^c$ form a subbasis for the topology on $\Ef$, so $\theta$ is continuous.

Finally, we must show that $\theta(\tX) = \Et(\s_\X)$. For $x\in \X$, let 
\[
\tilde x = (\klclass{k}{[x]}{l})_{(k,l)\in\I}.
\]
Because sets of the form $U(x, k,l)$ for $x\in \X$ and $(k,l)\in\I$ form a basis for the topology on $\tX$, the set $\{\tilde x\in \tX\mid x\in\X\}$ is dense in $\tX$.

We claim that $\theta(\tilde x) = \eta_x$. If $E(F;v) \in \eta_x$, then taking $k = |v|$ and $l =  \max\{|f|, |v| :  f\in F\}$ gives us that $\klclass{k}{[x]}{l}\subset C(F;v)$, and so $E(F;v)\in \theta(\tilde x)$. Conversely, if $E(F;v)\in \theta(\tilde x)$, then $\klclass{k}{[x]}{l}\subset C(F;v)$ for some $(k,l)\in \I$. Hence $x\in C(F;v)$,  $E(F;v)\in \eta_x$, and so $\theta(\tilde x) = \eta_x$. 

So $\theta: \tX\to \Ef(\s_\X)$ is continuous, injective, and maps a dense subspace of $\tX$ bijectively onto a dense subspace of $\Et(\s_\X)$. Both $\tX$ and $\Ef(\s_\X)$ are second countable, and so $\theta(\tX) \subset \Et(\s_\X)$. Since $\tX$ is compact, we must have that $\theta(\tX)$ is a closed set in $\Ef(\s_\X)$ which contains $\Eu(\s_\X)$, and so it contains its closure $\Et(\s_\X)$. Therefore $\theta(\tX) = \Et(\s_\X)$, and since $\tX$ is compact and $\Et(\s_\X)$ is Hausdorff, $\theta:\tX\to \Et(\s_\X)$ is a homeomorphism.
\end{proof}
Now that we have the above homeomorphism, we can establish the conditions we need to use the universal property of $\OX$.
\begin{prop}\label{DXEtiso}
There exists a $*$-isomorphism $\Psi:\DX\to C(\Et(\s_\X))$ such that $\Psi(1_{C(w,v)}) = 1_{D^t_{E(w,v)}}$ for all $w, v\in\A^*$. Furthermore, if $E(F;v)\in E_\X$, then $\Psi(1_{C(F;v)}) = 1_{D^t_{E(F;v)}}$.
\end{prop}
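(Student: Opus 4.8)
The plan is to build $\Psi$ by composing the Gelfand-type identification of $\DX$ with $C(\tX)$ and the homeomorphism $\theta$ of Proposition \ref{xttight}, and then to verify the formula on generators by testing everything pointwise on the dense set $\{\tilde x:x\in\X\}$. First I would recall from \cite{CarlsenThesis} that $\tX$ is the Gelfand spectrum of $\DX$, with the character of $\DX$ given by evaluation at a point $x\in\X$ corresponding to $\tilde x\in\tX$; consequently the map $q:\X\to\tX$, $q(x)=\tilde x$, induces via Gelfand duality a $*$-isomorphism $q^*:C(\tX)\to\DX$, $g\mapsto g\circ q$. In particular each generator $1_{C(w,v)}$ of $\DX$ equals $q^*(g_{w,v})$ for the unique $g_{w,v}\in C(\tX)$ with $g_{w,v}(\tilde x)=1_{C(w,v)}(x)$ for all $x\in\X$ --- uniqueness holding because $\{\tilde x:x\in\X\}$ is dense in $\tX$, as was shown inside the proof of Proposition \ref{xttight}. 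Since $\theta:\tX\to\Et(\s_\X)$ is a homeomorphism, $\theta^*:C(\Et(\s_\X))\to C(\tX)$, $f\mapsto f\circ\theta$, is also a $*$-isomorphism, and $\theta^*(1_A)=1_{\theta^{-1}(A)}$ for every clopen $A\subseteq\Et(\s_\X)$.

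I would then set $\Psi=(q^*\circ\theta^*)^{-1}:\DX\to C(\Et(\s_\X))$; this is a $*$-isomorphism, being the inverse of a composite of $*$-isomorphisms. To obtain $\Psi(1_{C(w,v)})=1_{D^t_{E(w,v)}}$ it is enough to show that $q^*(\theta^*(1_{D^t_{E(w,v)}}))=1_{C(w,v)}$ in $\DX\subseteq\ell^\infty(\X)$, and since both sides are functions on $\X$ one may simply evaluate at an arbitrary $x\in\X$. The proof of Proposition \ref{xttight} shows that $\theta\circ q$ sends $x$ to $\eta_x$, and $\eta_x\in\Eu(\s_\X)\subseteq\Et(\s_\X)$, so
\[
\big(q^*(\theta^*(1_{D^t_{E(w,v)}}))\big)(x)=\big(\theta^*(1_{D^t_{E(w,v)}})\big)(\tilde x)=1_{D^t_{E(w,v)}}(\eta_x),
\]
which equals $1$ exactly when $E(w,v)\in\eta_x$, that is, exactly when $x\in C(w,v)$ by definition of $\eta_x$. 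Hence the left-hand function is $1_{C(w,v)}$, as required.

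For the remaining assertion, note that $1_{C(F;v)}=\prod_{f\in F}1_{C(f,v)}$ is a finite product of generators and so lies in $\DX$, while $E(F;v)\in E_\X=E(\s_\X)$, so $D^t_{E(F;v)}$ and $1_{D^t_{E(F;v)}}$ are defined; repeating the computation of the previous paragraph verbatim with $E(F;v)$ in place of $E(w,v)$, and using that $E(F;v)\in\eta_x\Leftrightarrow x\in C(F;v)$, gives $\Psi(1_{C(F;v)})=1_{D^t_{E(F;v)}}$. I expect the one step that needs genuine care to be the very first one: identifying the abstract isomorphism $\DX\cong C(\tX)$ explicitly with precomposition by $q$, so that the passage from a function on $\X$ to a function on $\tX$ and then to a function on $\Et(\s_\X)$ can be tracked on the dense set $\{\tilde x\}$. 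Everything downstream of that is functoriality of $C(-)$ for the homeomorphism $\theta$ together with the two bookkeeping identities $\theta(\tilde x)=\eta_x$ and $E(F;v)\in\eta_x\Leftrightarrow x\in C(F;v)$, both already established.
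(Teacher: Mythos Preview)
Your argument is correct, and the isomorphism $\Psi$ you build coincides with the paper's: if $\psi:\DX\to C(\tX)$ is the map the paper cites from \cite[Chapter~2, Proposition~3]{CarlsenThesis}, then evaluating on the basis elements $1_{U(vx_f^l,|v|,l)}$ shows $\psi^{-1}=q^*$, so your $\Psi=(q^*\circ\theta^*)^{-1}$ is exactly the paper's $\Theta^{-1}\circ\psi$.

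The difference is only in how the formula on generators is verified. The paper works more combinatorially: it invokes Lemma~\ref{eqclasscontain} to write $C(w,v)$ as a finite disjoint union of $\kl{k}{l}$-classes, identifies $\theta^{-1}(D^t_{E(w,v)})$ as the corresponding disjoint union of basic sets $U(vx_f^l,k,l)$, and then applies the explicit formula for $\psi$ on each piece. You instead check equality of two elements of $\DX\subset\ell^\infty(\X)$ pointwise at every $x\in\X$, using the identities $\theta(\tilde x)=\eta_x$ and $E(w,v)\in\eta_x\Leftrightarrow x\in C(w,v)$ already established in Proposition~\ref{xttight} and the preceding lemma. Your route is shorter and avoids the equivalence-class bookkeeping; the paper's route has the small advantage that it quotes only the concrete formula for $\psi$ rather than the (equivalent but slightly stronger-sounding) statement that the character of $\DX$ at $\tilde x$ is evaluation at $x$. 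As you yourself note, the only point requiring care in your argument is that latter identification, and it does follow from the cited proposition.
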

\begin{proof}
Take $w,v\in\A^*$, and let $k = |v|$, $l = \max\{|w|,|v|\}$. There are only finitely many $l$-past equivalence classes, so pick a representative from each one, say $x^l_1, x^l_2, \dots x^l_{m(l)}$. By Lemma \ref{eqclasscontain}, $C(w,v)$ is a finite disjoint union of $\kl{k}{l}$ equivalence classes, that is there exists $F\subset \{1, \dots, m(l)\}$ such that 
\begin{eqnarray}
C(w,v) &=& \bigcup_{f\in F}\ \klclass{k}{[vx^l_f]}{l}\nonumber\\
       &=& \bigcup_{f\in F} C(v)\cap \sigma^{-k}([x_f^l]_l).\label{Cwvdisjoint}
\end{eqnarray}
From the proof of Proposition \ref{xttight}, if $\theta$ is as in \eqref{thetadef}, we must have that 
\[
\theta^{-1}(D^t_{E(w,v)}) = \bigcup_{f\in F} U(vx^l_f, k, l)
\]
where again this is a disjoint union. Thus, if $\Theta$ is the $*$-isomorphism from $C(\Et(\s_\X))$ to $C(\tX)$ induced by $\theta$, we have
\[
\Theta(1_{D_{E(w,v)}}) = \sum_{f\in F}1_{ U(vx^l_f, k, l)}.
\]
By \cite[Proposition 3 in Chapter 2]{CarlsenThesis}, there exists a $*$-isomorphism $\psi:\DX\to C(\tX)$ such that $\psi(1_{C(v)\cap \sigma^{-|v|}([x^l_f]_l)}) = 1_{U(vx_f^l, |v|, l)}$. By \eqref{Cwvdisjoint} we have
\begin{eqnarray*}
\Theta^{-1}\circ \psi\left( 1_{C(w,v)} \right) &=& \Theta^{-1}\circ \psi\left(1_{\bigcup_{f\in F} C(v)\cap \sigma^{-k}([x_f^l]_l)}\right)\\
 &=& \Theta^{-1}\left(\sum_{f\in F}\psi\left(1_{C(v)\cap \sigma^{-k}([x_f^l]_l)}\right)\right)\\
 &=& \Theta^{-1}\left(\sum_{f\in F}1_{ U(vx^l_f, k, l)}\right)\\
 &=& \Theta^{-1}(\Theta(1_{D^t_{E(w,v)}}))\\
 &=& 1_{D^t_{E(w,v)}}.
\end{eqnarray*}
Hence taking $\Psi = \Theta^{-1}\circ \psi$ verifies the first statement. The second statement follows from the fact that, for all $F\fs \A^*$ and $v\in \A^*$, we have
\[
1_{C(F;v)} = 1_{\cap_{f\in F}C(f, v)} = \prod_{f\in F}1_{C(f,v)}, 
\]
\[
D^t_{E(F;v)} = D^t_{\prod_{f\in F}E(f,v)} = \bigcap_{f\in F}D^t_{E(f,v)}. 
\]
\end{proof}
We now establish what we need to use the universal property of $\Ct(\s_\X)$
\begin{prop}\label{SXOXtight}
Let $\X$ be a one-sided subshift over $\A$. Then the map $\pi: \s_\X \to \OX$ defined by
\[
\pi(s_\alpha E(F;v)s_\beta^*) = S_\alpha S_v\left(\prod_{f\in F}S_f^*S_f\right)S_v^*S_\beta^*,\hspace{1cm} F\fs \A^*; \alpha, \beta, v\in \A^*
\]
\[
\pi(0) = 0
\]
is a tight representation of $\s_\X$.
\end{prop}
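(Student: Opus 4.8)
The plan is to verify in turn that $\pi$ is well defined, that it is a representation of $\s_\X$, and that it is tight; only the last step really leans on Propositions~\ref{xttight} and~\ref{DXEtiso}. Throughout I will write $\rho\colon\DX\to\OX$ for the $*$-homomorphism of Definition~\ref{OXdef}(2), so that $\rho(1_{C(\mu,\nu)})=S_\nu S_\mu^* S_\mu S_\nu^*$ and hence $\rho(1_{C(F;\nu)})=\prod_{f\in F}S_\nu S_f^* S_f S_\nu^*$.

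Well-definedness needs a little care, because, as the remark after Proposition~\ref{SXprop} records, a nonzero element of $\s_\X$ has no canonical expression $s_\alpha E(F;v)s_\beta^*$. The first move is to rewrite the defining formula, using $S_\mu S_\nu=S_{\mu\nu}$, the commutation relations \eqref{comm1} and \eqref{comm2}, and $S_\mu S_\mu^* S_\mu=S_\mu$ (to absorb $S_{\alpha v}^*S_{\alpha v}$ and $S_{\beta v}^*S_{\beta v}$ into the middle product), as
\[
\pi\bigl(s_\alpha E(F;v)s_\beta^*\bigr)=S_{\alpha v}\,\rho\bigl(1_{C(F\cup\{\alpha v,\beta v\};\epsilon)}\bigr)\,S_{\beta v}^*.
\]
Here $\alpha v$, $\beta v$, and the set $C(F\cup\{\alpha v,\beta v\};\epsilon)=\{x:\alpha vx,\ \beta vx\in\X\text{ and }fx\in\X\ \forall f\in F\}$ are all read off from the partial bijection $s_\alpha E(F;v)s_\beta^*$ itself (whose domain is $C(F\cup\{\alpha v\};\beta v)$ by Lemma~\ref{rangeandsource} and which carries $\beta vx\mapsto\alpha vx$), the only ambiguity being a simultaneous extension of $\alpha v$ and $\beta v$ by a common word $z$; and $S_z\,\rho(1_W)\,S_z^*=\rho(1_{zW})$ shows that such an extension leaves the right-hand side unchanged. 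Concretely, I would observe that any two presentations of the same element of $\s_\X$ are linked by a chain of the moves ``strip a common last letter from $\alpha$ and $\beta$'', ``enlarge $F$ by $\{\alpha v,\beta v\}$'', and ``replace $F$ by $G$ with $C(F;v)=C(G;v)$'', and check each move against the displayed formula. Taking $\alpha=\beta=\epsilon$ gives in passing $\pi(E(F;v))=\rho(1_{C(F;v)})$, which is what the tightness step will use.

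That $\pi$ is a representation is then routine. We have $\pi(0)=0$ by fiat, and $\pi(s^*)=\pi(s)^*$ is immediate since $\prod_{f\in F}S_f^*S_f$ is a self-adjoint product of commuting projections. For $\pi(st)=\pi(s)\pi(t)$ one takes $s=s_\alpha E(F;v)s_\beta^*$ and $t=s_\delta E(G;w)s_\eta^*$ and works through the seven cases of Lemma~\ref{productcomputation}: the $\OX$-relations $S_\mu S_\nu=S_{\mu\nu}$, \eqref{comm1}, \eqref{comm2} and \eqref{ortho} are precisely the ambient-algebra shadows of the word manipulations used to prove Lemmas~\ref{idempotentlemma} and~\ref{productcomputation}, so that applying $\pi$ to the computed form of $st$ reproduces $\pi(s)\pi(t)$ term by term, while the degenerate case $st=0$ corresponds to $S_{\beta v}^*S_{\delta w}=0$ (via \eqref{ortho}) or to an empty intersection of cylinder sets. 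I would display only one representative case.

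For tightness, note first that the tightness condition in the definition of a tight representation refers only to idempotents, so $\pi$ is tight precisely when $\pi|_{E_\X}$ is a tight representation of the semilattice $E_\X$; and since $\bigvee_{z\in Z}q_z=1-\prod_{z\in Z}(1-q_z)$ for any finite family of commuting projections, composing a tight representation of a semilattice with a unital $*$-homomorphism again yields a tight representation. By Proposition~\ref{DXEtiso} there is a $*$-isomorphism $\Psi\colon\DX\to C(\Et(\s_\X))$ with $\Psi(1_{C(F;v)})=1_{D^t_{E(F;v)}}$; combined with $\pi(E(F;v))=\rho(1_{C(F;v)})$ this shows $\pi|_{E_\X}=(\rho\circ\Psi^{-1})\circ\beta_0$, where $\beta_0\colon E_\X\to C(\Et(\s_\X))$ is $E(F;v)\mapsto 1_{D^t_{E(F;v)}}$ and $\rho\circ\Psi^{-1}$ is a unital $*$-homomorphism. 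But $\beta_0$ is nothing other than the restriction to $E_\X$ of the universal tight representation $\pi_t\colon\s_\X\to\Ct(\s_\X)$, under the identification of $C^*(\pi_t(E_\X))$ with $C(\Et(\s_\X))$, hence is tight; so $\pi|_{E_\X}$ is tight, and therefore $\pi$ is a tight representation of $\s_\X$. I expect the main obstacle to be the multiplicativity check in the previous step: it is the one place where the combinatorics of words must genuinely be tracked, and lining up the seven cases of Lemma~\ref{productcomputation} with the $\OX$-relations is where an error could slip in; the tightness step itself, despite looking the most serious, collapses quickly once Proposition~\ref{DXEtiso} is in hand.
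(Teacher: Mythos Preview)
Your proposal is correct and follows essentially the same approach as the paper: both argue that $\pi$ is multiplicative because the computations of Lemmas~\ref{idempotentlemma} and~\ref{productcomputation} carry over verbatim to $\OX$ via the relations \eqref{comm1}--\eqref{ortho}, and both deduce tightness by using Proposition~\ref{DXEtiso} to identify $\pi|_{E_\X}$ with $\pi_t|_{E_\X}$ composed with a unital $*$-homomorphism, then transferring the tightness of $\pi_t$. Your treatment is in fact more careful than the paper's in two respects: you explicitly address well-definedness of $\pi$ (which the paper passes over), and your tightness argument is phrased so as not to tacitly assume that the map $\rho\colon\DX\to\OX$ is injective, whereas the paper's line ``$\pi_t(e)=\Psi\circ\pi(e)$'' implicitly identifies $\DX$ with its image in $\OX$.
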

\begin{proof}

Because Definition \ref{OXdef}.1 and the relations \eqref{comm1}, \eqref{comm2}, \eqref{ortho} hold in $\OX$, and each $S_\mu$ is a partial isometry, the same computations from Lemma \ref{idempotentlemma} hold in $\OX$. Hence, the products computed in Lemma \ref{productcomputation} hold in $\OX$, and so $\pi$ is a representation of $\s_\X$.

Now suppose we have $X, Y, Z\fs E_\X$ such that $Z$ is a cover of $E_\X^{X,Y}$. Then we know that, for the universal tight representation $\pi_t$, we have
\[
\bigvee_{z\in Z}\pi_t(z) = \prod_{x\in X}\pi_t(x)\prod_{y\in Y}(1-\pi_t(y)).
\]
By Proposition \ref{DXEtiso}, $\pi_t(e) = \Psi\circ \pi(e)$ for all $e\in E_\X$. Thus we have
\begin{eqnarray*}
\bigvee_{z\in Z}\pi_t(z) &=& \prod_{x\in X}\pi_t(x)\prod_{y\in Y}(1-\pi_t(y))\\
\bigvee_{z\in Z}\Psi\circ \pi(z) &=& \prod_{x\in X}\Psi\circ \pi(x)\prod_{y\in Y}(\Psi\circ \pi(1)-\Psi\circ \pi(y))\\
\Psi\left(\bigvee_{z\in Z}\pi(z)\right) &=& \Psi\left(\prod_{x\in X} \pi(x)\prod_{y\in Y}(\pi(1)- \pi(y))\right)\\
\bigvee_{z\in Z}\pi(z) &=& \prod_{x\in X} \pi(x)\prod_{y\in Y}(I_{\OX}- \pi(y))
\end{eqnarray*}
and so, $\pi$ is a tight representation.
\end{proof}

\begin{theo}\label{mainresult}
Let $\X$ be a one-sided subshift over $\A$, let $\s_\X$ be as in \eqref{SXdef}, and let $\OX$ be as in Definition \ref{OXdef}. Then $\Ct(\s_\X)$ and $\OX$ are $*$-isomorphic.
\end{theo}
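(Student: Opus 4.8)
The plan is to construct mutually inverse $*$-homomorphisms between $\Ct(\s_\X)$ and $\OX$ using the two universal properties at our disposal, with the bridge provided by the $*$-isomorphism $\Psi$ from Proposition \ref{DXEtiso}. First I would use Proposition \ref{SXOXtight}: since $\pi:\s_\X\to\OX$ is a tight representation, the universal property of $\Ct(\s_\X)$ yields a $*$-homomorphism $\rho:\Ct(\s_\X)\to\OX$ with $\rho\circ\pi_t=\pi$. Its image contains $\pi(s_a)=S_a$ for every $a\in\A$, hence all the $S_\mu$, so $\rho$ is surjective onto $\OX$.

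Next I would build a homomorphism in the other direction. For each $\mu\in\A^*$ set $T_\mu=\pi_t(s_\mu)\in\Ct(\s_\X)$. Using that $\pi_t$ is a representation and that $s_\mu s_\nu=s_{\mu\nu}$ in $\s_\X$, condition (1) of Definition \ref{OXdef} holds for the $T_\mu$. For condition (2), I note that $T_\nu T_\mu^*T_\mu T_\nu^*=\pi_t(s_\nu s_\mu^* s_\mu s_\nu^*)=\pi_t(E(\mu,\nu))$, and under the identification $\pi_t(e)\mapsto 1_{D_e^t}$ of the commutative subalgebra generated by $\pi_t(E_\X)$ with $C(\Et(\s_\X))$, this corresponds to $1_{D^t_{E(\mu,\nu)}}=\Psi(1_{C(\mu,\nu)})$. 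Thus the assignment $1_{C(\mu,\nu)}\mapsto T_\nu T_\mu^*T_\mu T_\nu^*$ is exactly the composite $\Psi$ followed by the inclusion $C(\Et(\s_\X))\hookrightarrow\Ct(\s_\X)$, which is a genuine $*$-homomorphism on $\DX$. The $T_\mu$ are partial isometries because $s_\mu s_\mu^* s_\mu=s_\mu$ in $\s_\X$ forces $T_\mu T_\mu^* T_\mu=T_\mu$. Hence the universal property of $\OX$ in Definition \ref{OXdef} gives a $*$-homomorphism $\lambda:\OX\to\Ct(\s_\X)$ with $\lambda(S_\mu)=T_\mu=\pi_t(s_\mu)$.

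Finally I would check that $\rho$ and $\lambda$ are mutually inverse. The composite $\rho\circ\lambda$ sends $S_\mu\mapsto\rho(\pi_t(s_\mu))=\pi(s_\mu)=S_\mu$, so it is the identity on a generating set of $\OX$, hence the identity. For $\lambda\circ\rho$, it sends $\pi_t(s_a)=T_a\mapsto\pi(s_a)=S_a\mapsto\lambda(S_a)=T_a=\pi_t(s_a)$; since $\Ct(\s_\X)$ is generated by $\pi_t(\s_\X)$ and every element of $\s_\X$ has the form $s_\alpha E(F;v)s_\beta^*$, which is a product of $s_a$'s and $s_a^*$'s, the elements $\pi_t(s_a)$ generate $\Ct(\s_\X)$ as a $*$-algebra, so $\lambda\circ\rho$ is also the identity. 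Therefore $\rho$ is a $*$-isomorphism.

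The main obstacle I anticipate is verifying condition (2) of Definition \ref{OXdef} rigorously — that is, confirming that the prescribed map really extends to a $*$-homomorphism on all of $\DX$ rather than just being defined on generators. This is precisely where Proposition \ref{DXEtiso} does the heavy lifting: it identifies the map $1_{C(\mu,\nu)}\mapsto 1_{D^t_{E(\mu,\nu)}}$ with an honest $*$-isomorphism $\Psi:\DX\to C(\Et(\s_\X))$, and since the copy of $C(\Et(\s_\X))$ generated by $\pi_t(E_\X)$ sits inside $\Ct(\s_\X)$, composing gives the required $*$-homomorphism automatically. One should also take a small amount of care that the various algebraic identities \eqref{cuntz}--\eqref{ortho} are not needed as input to the universal property but rather follow from it, so there is no circularity; and that the relation defining a tight representation is genuinely stronger than what is needed to land in $\OX$, which is why the surjectivity and injectivity of $\rho$ both go through cleanly.
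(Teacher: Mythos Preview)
Your proposal is correct and follows essentially the same approach as the paper: build $\rho$ (the paper's $\tau$) from the universal property of $\Ct(\s_\X)$ via Proposition~\ref{SXOXtight}, build $\lambda$ (the paper's $\kappa$) from the universal property of $\OX$ via Proposition~\ref{DXEtiso}, and observe they are mutual inverses on generators. You simply spell out in more detail how Proposition~\ref{DXEtiso} verifies condition~(2) of Definition~\ref{OXdef}, which the paper leaves implicit.
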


\begin{proof}
By Proposition \ref{DXEtiso} and the universal property of $\OX$, there exists a $*$-homomorphism $\kappa: \OX\to \Ct(\s_\X)$ such that $\kappa(S_\mu) = \pi_t(s_\mu)$ for all $\mu\in \A^*$. By Proposition \ref{SXOXtight} and the fact that $\Ct(\s_\X)$ is universal for tight representations for $\s_\X$, there exists a $*$-homomorphism $\tau:\Ct(\s_\X)\to \OX$ such that $\tau(\pi_t(s_\mu)) = S_\mu$ for all $\mu\in \A^*$. We therefore must have that $\kappa$ and $\tau$ are inverses of each other, and so $\Ct(\s_\X)$ and $\OX$ are $*$-isomorphic.
\end{proof}

\subsection{$\OX$ as a partial crossed product}

We close with a nice consequence of Theorem \ref{mainresult}. Recall from Lemma \ref{stronglyEunitarylemma} that $\s_\X$ is strongly $E^*$-unitary. Any strongly $E^*$-unitary inverse semigroup $S$ admits a {\em universal group} $\U(S)$, that is there exists an idempotent-pure partial homomorphism $\iota:S\setminus \{0\}\to \U(S)$ such that if every other idempotent-pure partial homomorphism from $S$ factors through $\iota$. We have the following result about strongly $E^*$-unitary inverse semigroups from \cite{MS14}. 

\begin{theo}\label{MStheo}(See \cite[Theorem 5.3]{MS14}) Let $S$ be a countable strongly $E^*$-unitary inverse semigroup. Then there is a natural partial action of $\U(S)$ on $\Et(S)$ such that the partial crossed product $C(\Et(S))\rtimes \U(S)$ is isomorphic to $\Ct(S)$.
\end{theo}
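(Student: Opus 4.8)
This is a restatement of \cite[Theorem 5.3]{MS14}, so the plan is to reproduce the Milan--Steinberg argument in the present notation. The starting point is Exel's realization of the tight C*-algebra: for any inverse semigroup $S$ with zero, $\Ct(S)$ is $*$-isomorphic to the full C*-algebra $C^*(\gt(S))$ of the tight groupoid $\gt(S)$, the groupoid of germs of the canonical action $\theta$ of $S$ on its tight spectrum $\Et(S)$. Here each $\theta_s$ is a homeomorphism $D^t_{s^*s}\to D^t_{ss^*}$, the unit space of $\gt(S)$ is $\Et(S)$, and the canonical copy of $C(\Et(S))$ inside $\Ct(S)$ is exactly the functions supported on the unit space. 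Thus it suffices to show that, when $S$ is countable and strongly $E^*$-unitary, $\gt(S)$ is isomorphic as a topological groupoid to a partial transformation groupoid $\U(S)\ltimes\Et(S)$ in a way carrying the unit space identically: indeed the groupoid C*-algebra of a partial transformation groupoid is, canonically, the corresponding partial crossed product, and countability of $S$ makes $\Et(S)$ second countable so that all of these identifications are unambiguous.

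The one technical heart of the matter is the following assertion about the idempotent-pure partial homomorphism $\iota\colon S\setminus\{0\}\to G:=\U(S)$: \emph{if $\iota(s)=\iota(t)$ and $\xi\in D^t_{s^*s}\cap D^t_{t^*t}$, then $[s,\xi]=[t,\xi]$ in $\gt(S)$.} To see this, note $\iota(s^*t)=\iota(s)^{-1}\iota(t)=1_G$, so by idempotent-purity $s^*t$ is a (nonzero) idempotent $e\in E(S)$; a short argument with the character/filter description of $\Et(S)$ then gives $\xi\in D^t_e$, and since $D^t_e$ is open and $E(S)$ is commutative one checks that the germs of $\theta_s$ and $\theta_t$ at $\xi$ coincide, as required. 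This single fact does double duty. First, it lets one build the partial action of $G$ on $\Et(S)$: for $g\in G$ put $U_g=\bigcup_{\iota(s)=g}D^t_{ss^*}$, an open set, and define $\theta_g\colon U_{g^{-1}}\to U_g$ by setting $\theta_g=\theta_s$ on $D^t_{s^*s}$ for any $s$ with $\iota(s)=g$; the assertion above makes this well defined, and the partial-action axioms ($\theta_{1_G}=\mathrm{id}$, $\theta_{g^{-1}}=\theta_g^{-1}$, $\theta_g\circ\theta_h\subseteq\theta_{gh}$) then reduce to routine manipulations of preimages together with multiplicativity of $\iota$ on nonzero products.

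Second, it gives injectivity of the candidate isomorphism $\Phi\colon\gt(S)\to G\ltimes\Et(S)$, $\Phi([s,\xi])=(\iota(s),\xi)$. Well-definedness of $\Phi$: if $[s,\xi]=[t,\xi]$ then $se=te$ for some $e\in E(S)$ with $\xi\in D^t_e$, and applying $\iota$ (with $\iota(e)=1_G$) yields $\iota(s)=\iota(t)$. Surjectivity is immediate from the definition of $U_g$. Injectivity is precisely the assertion above: if $\iota(s)=\iota(t)$ and $\xi$ is a common source point, then $[s,\xi]=[t,\xi]$. That $\Phi$ is a continuous open groupoid homomorphism — hence a topological groupoid isomorphism — is then verified by comparing the standard basic open bisections on each side (the $\Theta(s,e)$-type sets for $\gt(S)$ against the graphs of $\theta_g$ restricted to basic opens). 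Composing $\Ct(S)\cong C^*(\gt(S))\cong C^*(G\ltimes\Et(S))= C(\Et(S))\rtimes G$ and tracking the unit space shows the copy of $C(\Et(S))$ goes to the canonical one, which is the statement.

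The main obstacle is exactly the boxed assertion of the second paragraph: this is where strong $E^*$-unitarity is indispensable, since idempotent-purity of $\iota$ is what forbids $\gt(S)$ from carrying germ identifications finer than those already seen by the group-valued cocycle $[s,\xi]\mapsto\iota(s)$; without it $\Phi$ need not be injective and the groupoid is genuinely not a transformation groupoid. Everything else — verifying the partial-action axioms, the continuity and openness of $\Phi$, and invoking second countability (from countability of $S$) to pin down the groupoid-algebra and partial-crossed-product identifications — is routine but should be carried out with enough care that the final C*-isomorphism restricts to the identity on $C(\Et(S))$.
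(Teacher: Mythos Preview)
The paper gives no proof of this theorem: it is simply quoted as \cite[Theorem~5.3]{MS14} and used as a black box, with even the definitions of partial action and partial crossed product deferred to another reference. There is therefore nothing in the paper to compare your argument against; your proposal already goes well beyond what the paper does by sketching the Milan--Steinberg argument itself.

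That said, your outline of that argument is essentially on target. One point deserves care: in your key assertion you write $\iota(s^*t)=\iota(s)^{-1}\iota(t)=1_G$ and conclude that $s^*t\in E(S)$, but this presupposes $s^*t\neq 0$, which does not follow from $\xi\in D^t_{s^*s}\cap D^t_{t^*t}$ alone (that only gives $s^*s\cdot t^*t\neq 0$). The clean fix is to first set $e=s^*s\cdot t^*t\in\xi$ and replace $s,t$ by $se,te$, so that now $s^*s=t^*t=e$; then $st^*\cdot t=s\,t^*t=se=s\neq 0$ forces $st^*\neq 0$, whence idempotent-purity makes $st^*$ an idempotent, and a short computation yields $se=te$, i.e.\ $[s,\xi]=[t,\xi]$. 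With that adjustment the plan goes through.
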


We do not define partial actions or partial crossed products here -- the interested reader is directed to the excellent reference \cite{ExBook}. 

In a preprint version of this work, we concluded the paper by using the above to deduce that $\OX$ could be written as a partial crossed product by the universal group of $\s_\X$. We are grateful to the referee for pointing out that our results allow us to easily see what the universal group is and to say even more about this partial crossed product. In what remains of this paper, we implement the referee's suggestions.

The following Lemma is a consequence of our proof of Lemma \ref{stronglyEunitarylemma}. 

\begin{lem}\label{universalfreegroup}
Let $\X$ be a one-sided subshift over $\A$. Then $\U(\s_\X)$ is isomorphic to $\mathbb{F}_\A$. 
\end{lem}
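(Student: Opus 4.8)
The plan is to verify directly that the pair $(\mathbb{F}_\A,\phi)$, where $\phi\colon\s_\X\setminus\{0\}\to\mathbb{F}_\A$ is the idempotent-pure partial homomorphism $\phi(s_\alpha E(F;v)s_\beta^*)=\alpha\beta^{-1}$ constructed in the proof of Lemma~\ref{stronglyEunitarylemma}, satisfies the universal property defining $\U(\s_\X)$; since $\s_\X$ is strongly $E^*$-unitary this universal group exists and is unique up to isomorphism, so the conclusion follows. Thus I would fix an arbitrary group $\Gamma$ and an arbitrary idempotent-pure partial homomorphism $\psi\colon\s_\X\setminus\{0\}\to\Gamma$ and exhibit the unique group homomorphism $\rho\colon\mathbb{F}_\A\to\Gamma$ with $\rho\circ\phi=\psi$.

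First I would define $\rho$ on the free generators by $\rho(a)=\psi(s_a)$ for $a\in\A$; since $\mathbb{F}_\A$ is free on $\A$, this extends uniquely to a homomorphism $\rho\colon\mathbb{F}_\A\to\Gamma$. Uniqueness of any factoring map is then immediate: because $\phi(s_a)=a$, any $\rho'$ with $\rho'\circ\phi=\psi$ must agree with $\rho$ on $\A$, hence everywhere. (This also records that $\phi(\s_\X\setminus\{0\})$ generates $\mathbb{F}_\A$, which is what makes $(\mathbb{F}_\A,\phi)$ a legitimate candidate for the universal group.)

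The heart of the argument is the identity $\rho\circ\phi=\psi$. Here the only thing to observe is that $0$ is absorbing in $\s_\X$, so any nonzero product of elements of $\s_\X$ has all of its sub-products nonzero; consequently the partial-homomorphism property of $\psi$ may be applied repeatedly to the factorizations $s_\mu=s_{\mu_1}\cdots s_{\mu_{|\mu|}}$ and $s=s_\alpha\cdot E(F;v)\cdot s_\beta^*$. This yields, for a nonzero $s=s_\alpha E(F;v)s_\beta^*\in\s_\X$,
\[
\psi(s)=\psi(s_\alpha)\,\psi\bigl(E(F;v)\bigr)\,\psi(s_\beta)^{-1}=\Bigl(\prod_{i=1}^{|\alpha|}\psi(s_{\alpha_i})\Bigr)\psi\bigl(E(F;v)\bigr)\Bigl(\prod_{j=1}^{|\beta|}\psi(s_{\beta_j})\Bigr)^{-1},
\]
with empty products read as $1_\Gamma$ (this handles $\alpha=\epsilon$ or $\beta=\epsilon$, since $s_\epsilon\in E(\s_\X)$). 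As $E(F;v)$ is a nonzero idempotent and $\psi$ is idempotent pure, $\psi(E(F;v))=1_\Gamma$; comparing with $\rho(\phi(s))=\rho(\alpha)\rho(\beta)^{-1}=\bigl(\prod_i\psi(s_{\alpha_i})\bigr)\bigl(\prod_j\psi(s_{\beta_j})\bigr)^{-1}$ gives $\rho(\phi(s))=\psi(s)$.

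I do not expect a genuine obstacle in this proof: the substance is entirely the universal property of the free group together with bookkeeping. The points requiring a little care are the degenerate cases where $\alpha$ or $\beta$ is empty, the (trivial but needed) remark that nonzero products in $\s_\X$ have nonzero sub-products so that $\psi$ really does distribute over them, and recalling that idempotent purity of $\psi$ forces every nonzero idempotent to be sent to $1_\Gamma$. Once $(\mathbb{F}_\A,\phi)$ is seen to have the universal property, the isomorphism $\U(\s_\X)\cong\mathbb{F}_\A$ is the standard uniqueness of universal objects.
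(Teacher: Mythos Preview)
Your proof is correct. The paper argues slightly differently: it takes the universal group $(\U(\s_\X),\iota)$ as already given, then builds mutually inverse group homomorphisms---one $\mathbb{F}_\A\to\U(\s_\X)$ from freeness (sending $a\mapsto\iota(s_a)$, using that the $\iota(s_a)$ generate $\U(\s_\X)$), and one $\U(\s_\X)\to\mathbb{F}_\A$ from the universal property of $\U(\s_\X)$ applied to $\phi$. Your route instead verifies directly that $(\mathbb{F}_\A,\phi)$ itself enjoys the defining universal property and then appeals to uniqueness of universal objects. The content is the same; your version is marginally more self-contained since it avoids invoking the abstract existence of $\U(\s_\X)$, while the paper's version is a line shorter because it outsources the factoring computation to the universal property.

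One minor remark: where you write ``$\psi$ is idempotent pure, so $\psi(E(F;v))=1_\Gamma$,'' that conclusion already follows just from $\psi$ being a partial homomorphism to a group (if $e\neq 0$ is idempotent then $e\cdot e=e\neq 0$ gives $\psi(e)^2=\psi(e)$, whence $\psi(e)=1_\Gamma$). Idempotent-purity is the \emph{converse} inclusion $\psi^{-1}(1_\Gamma)\subset E(\s_\X)$ and is not actually used in your factoring step. This does not affect the validity of your argument.
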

\begin{proof}
Let $\phi: \s_\X:\to \mathbb{F}_\A$ be the partial homomorphism from the proof of Lemma \ref{stronglyEunitarylemma}. The group $\U(\s_\X)$ is generated by $\iota(s_a)$ for $a\in \A$, so there is a group homomorphism from $\mathbb{F}_\A$ to $\U(\s_\X)$ which sends $a\in\A$ to $\iota(s_a)$. From the definition of $\U(\s_\X)$, there exists a group homomorphism from $\U(\s_\X)$ to $\mathbb{F}_\A$ such that $\iota(s_a) = a$. Therefore, $\U(\s_\X)$ is isomorphic to $\mathbb{F}_\A$. 
\end{proof}

We now have the following from Theorem \ref{MStheo}
\begin{cor}\label{FreePartialCrossedProduct}
Let $\X$ be a one-sided subshift over $\A$, and let $\OX$ be as in Definition \ref{OXdef}. Then there is partial action of $\mathbb{F}_\A$ on $\tilde\X$ such that $\OX \cong C(\tilde\X)\rtimes \mathbb{F}_\A$.  
\end{cor}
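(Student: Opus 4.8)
The plan is to assemble the corollary directly from results already in hand. The main result, Theorem~\ref{mainresult}, gives $\OX\cong\Ct(\s_\X)$, so it suffices to identify $\Ct(\s_\X)$ with a partial crossed product of the form $C(\tX)\rtimes\mathbb{F}_\A$. For this I would invoke Theorem~\ref{MStheo} (the Milan--Steinberg result): $\s_\X$ is a countable inverse semigroup — countability is immediate from the description~\eqref{SXdef}, since $\A^*$ is countable and hence so is the set of $4$-tuples $(s_\alpha, E(F;v), s_\beta)$ — and it is strongly $E^*$-unitary by Lemma~\ref{stronglyEunitarylemma}. So Theorem~\ref{MStheo} applies and yields a natural partial action of the universal group $\U(\s_\X)$ on $\Et(\s_\X)$ with $C(\Et(\s_\X))\rtimes\U(\s_\X)\cong\Ct(\s_\X)$.

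The two remaining ingredients are purely a matter of substituting identifications established earlier. First, Lemma~\ref{universalfreegroup} identifies $\U(\s_\X)$ with $\mathbb{F}_\A$, so the acting group in the partial crossed product is $\mathbb{F}_\A$. Second, Proposition~\ref{xttight} gives a homeomorphism $\theta:\tX\to\Et(\s_\X)$, so $C(\Et(\s_\X))\cong C(\tX)$, and the partial action of $\mathbb{F}_\A$ on $\Et(\s_\X)$ transports along $\theta$ to a partial action on $\tX$. Chaining the isomorphisms
\[
\OX\;\cong\;\Ct(\s_\X)\;\cong\;C(\Et(\s_\X))\rtimes\U(\s_\X)\;\cong\;C(\tX)\rtimes\mathbb{F}_\A
\]
gives the claim.

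Honestly there is no serious obstacle here; the corollary is designed to be a one-line deduction once Theorem~\ref{mainresult}, Lemma~\ref{stronglyEunitarylemma}, Lemma~\ref{universalfreegroup}, Proposition~\ref{xttight}, and Theorem~\ref{MStheo} are in place. The only point requiring a word of care is the verification that $\s_\X$ is countable, so that Theorem~\ref{MStheo} applies as stated; and one should make sure the partial action genuinely transports along the homeomorphism $\theta$ — but transporting a partial action along a homeomorphism of the spectrum is formal, since a partial action on a locally compact space corresponds to a partial action by partial automorphisms of the function algebra, and $\theta$ induces a $*$-isomorphism $C(\Et(\s_\X))\to C(\tX)$ intertwining everything. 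Thus the proof is essentially a citation of Theorem~\ref{MStheo} followed by the substitutions $\U(\s_\X)=\mathbb{F}_\A$ and $\Et(\s_\X)\cong\tX$.
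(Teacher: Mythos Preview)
Your proposal is correct and follows essentially the same approach as the paper, which simply states the corollary as an immediate consequence of Theorem~\ref{MStheo} together with the identifications $\U(\s_\X)\cong\mathbb{F}_\A$ (Lemma~\ref{universalfreegroup}), $\Et(\s_\X)\cong\tX$ (Proposition~\ref{xttight}), and $\Ct(\s_\X)\cong\OX$ (Theorem~\ref{mainresult}). Your version is in fact more explicit than the paper's, spelling out the countability check and the transport of the partial action along the homeomorphism, both of which the paper leaves tacit.
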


At this point we must direct the reader to the recent preprint \cite{ED15} which constructs by hand the partial action from Corollary \ref{FreePartialCrossedProduct}, studies it in detail, and uses it to give necessary and sufficient conditions on $\X$ to guarantee that $\OX$ is simple. The article \cite{ED15} appeared after the first preprint version of this work but before the final version was accepted. Therefore, the result in Corollary \ref{FreePartialCrossedProduct} is original to \cite{ED15}.

As the referee points out, one can say a little more about this partial crossed product. Given a partial action $\theta$ of a group $\Gamma$ on a space $X$, one can always construct a space $\tilde X \supset X$ and a global action $\tilde \theta$ of $\Gamma$ on $\tilde X$ such that the restriction of $\theta$ to $X$ is the original partial action -- this is called the {\em enveloping action} for $\theta$, see \cite{Ab03}. Unfortunately, even if $X$ is Hausdorff,  $\tilde X$ may not be. When $X$ and $\tilde X$ are both locally compact and Hausdorff, then the partial crossed product $C_0(X)\rtimes_\theta \Gamma$ is strongly Morita equivalent to the crossed product $C_0(\tilde X)\rtimes_{\tilde\theta} \Gamma$, see \cite{Ab03} for the details.

In our situation, \cite[Corollary 6.17]{MS14} says that because $\s_\X$ is $F^*$-inverse, the space for the enveloping action for the partial action in \cite{ED15} and Corollary \ref{FreePartialCrossedProduct} is Hausdorff. Therefore we have the following.

\begin{cor}\label{hausdorffglobalization}
Let $\X$ be a one-sided subshift over $\A$, and let $\OX$ be as in Definition \ref{OXdef}. Then there exists a locally compact Hausdorff space $\Omega$ and an action of $\mathbb{F}_\A$ on $\Omega$ such that $\OX$ is strongly Morita equivalent to $C_0(\Omega)\rtimes \mathbb{F}_\A$. 
\end{cor}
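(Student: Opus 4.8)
The plan is to combine three ingredients already developed above: Theorem~\ref{mainresult} (the $*$-isomorphism $\OX\cong\Ct(\s_\X)$), Corollary~\ref{FreePartialCrossedProduct} together with Theorem~\ref{MStheo} (which realizes $\Ct(\s_\X)$ as the partial crossed product $C(\tX)\rtimes_\theta\mathbb{F}_\A$ for an explicit partial action $\theta$), and the general theory of enveloping actions from \cite{Ab03}. First I would invoke the fact that $\s_\X$ is $F^*$-inverse (which, via the partial homomorphism $\phi$ of Lemma~\ref{stronglyEunitarylemma}, follows from Lemma~\ref{maximalelementlemma}: each nonempty fiber $\phi^{-1}(\alpha\beta^{-1})$ has the maximal element $s_\alpha s_\beta^*$), so that \cite[Corollary 6.17]{MS14} applies and guarantees that the globalization of the partial action $\theta$ on $\tX$ takes place on a Hausdorff space.

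Next I would set up the enveloping (global) action. By \cite{Ab03}, any partial action $\theta$ of a group $\Gamma$ on a topological space $X$ admits an enveloping action $\tilde\theta$ of $\Gamma$ on a space $\Omega\supset X$ in which $X$ sits as an open subset and $\theta$ is recovered by restriction. Applying this to $\theta$ on $X=\tX$ and $\Gamma=\mathbb{F}_\A$ produces a space $\Omega$ with an $\mathbb{F}_\A$-action $\tilde\theta$. The point established by \cite[Corollary 6.17]{MS14} is precisely that, for $F^*$-inverse $S$, this $\Omega$ is Hausdorff; since $\tX$ is compact (hence locally compact) and Hausdorff, $\Omega$ is then automatically locally compact Hausdorff as well (it carries a free-ish global action with a compact open ``fundamental'' piece $\tX$ whose orbit covers $\Omega$). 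I would spell out only that $\Omega = \bigcup_{g\in\mathbb{F}_\A}\tilde\theta_g(\tX)$ is an increasing-type union of translates of the compact open set $\tX$, from which local compactness follows.

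Then I would apply the Morita equivalence theorem for enveloping actions: when both $X$ and its globalization $\Omega$ are locally compact Hausdorff, \cite{Ab03} gives that the partial crossed product $C_0(\tX)\rtimes_\theta\mathbb{F}_\A = C(\tX)\rtimes_\theta\mathbb{F}_\A$ is strongly Morita equivalent to the (ordinary) crossed product $C_0(\Omega)\rtimes_{\tilde\theta}\mathbb{F}_\A$. Chaining this with Corollary~\ref{FreePartialCrossedProduct} and Theorem~\ref{mainresult} yields
\[
\OX \;\cong\; \Ct(\s_\X) \;\cong\; C(\tX)\rtimes_\theta\mathbb{F}_\A \;\sim_{\mathrm{Morita}}\; C_0(\Omega)\rtimes_{\tilde\theta}\mathbb{F}_\A,
\]
which is the assertion, with the action of $\mathbb{F}_\A$ on $\Omega$ being $\tilde\theta$.

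The main obstacle—really the only nontrivial point—is verifying that the globalization space $\Omega$ is Hausdorff, since enveloping actions of Hausdorff partial dynamical systems are notoriously often non-Hausdorff. But this is exactly what is handled by the cited \cite[Corollary 6.17]{MS14} using the $F^*$-inverse property of $\s_\X$, so in the write-up I would reduce to checking that $\s_\X$ is $F^*$-inverse (done via Lemma~\ref{maximalelementlemma} and the partial homomorphism $\phi$) and that $\s_\X$ is countable (clear, as $\A$ is finite so $\A^*$ is countable and $\s_\X$ consists of triples/finite-subsets over $\A^*$), and then cite the two results from \cite{MS14} and \cite{Ab03}. Everything else is a formal concatenation of isomorphisms and the Morita equivalence statement, so I would not belabor it.
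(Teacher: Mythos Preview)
Your proposal is correct and follows essentially the same route as the paper: invoke the $F^*$-inverse property of $\s_\X$ (via Lemma~\ref{maximalelementlemma}) so that \cite[Corollary~6.17]{MS14} yields a Hausdorff enveloping space, and then apply Abadie's Morita equivalence from \cite{Ab03} to the partial crossed product of Corollary~\ref{FreePartialCrossedProduct}. The paper's argument is slightly terser (it does not spell out local compactness of $\Omega$ or countability of $\s_\X$), but the logical skeleton is identical.
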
 

{\bf Acknowledgment:} I am grateful to the referee for an extremely careful reading, and for pointing out that the results of this paper could be strengthened to include Lemma \ref{universalfreegroup}, Corollary \ref{FreePartialCrossedProduct}, and Corollary \ref{hausdorffglobalization}.

\newpage
\begin{center}\Large{Correction to ``Inverse semigroups associated to subshifts''}
\end{center}

	{\bf Abstract:} The results of this paper are not true in the generality stated. In this note, we correct the paper by showing the results hold under an additional assumption, namely Matsumoto's condition \eqref{eq:I}.

\section{Correction to the above paper}
As mentioned above, most of the results in this paper are not correct. The problem stems from Remark~\ref{rmk:incorrectrmk} which states that if $s_\alpha E(F;v)s_\beta^*$ is in standard form, then $\alpha$ and $\beta$ are unique.
This is not true in general, and unfortunately many of the results that follow rely on this statement. For instance, if $\A = \{a\}$ is a singleton then $\X$ is also a singleton and so all the nonempty functions in $\s_\X$ coincide. However, we have $s_as_\epsilon^* = s_{aa}s_\epsilon^*=  s_\epsilon s_a^*$ with all elements in standard form.  

We will show that uniqueness will follow if we assume $\X$ satisfies condition \eqref{eq:I} of Matsumoto (which we define for convenience below \eqref{eq:I}). Then the statements in the above article are true if one assumes that $\X$ satisfies condition \eqref{eq:I} in the following results: Lemma~3.10, Lemma~3.11, Lemma~3.12, Proposition~3.13, Proposition~4.7, Theorem~4.8, Lemma~4.10, Corollary~4.11, and Corollary~4.12.  Note that condition \eqref{eq:I} is {\bf not} required for Lemma~4.2, Lemma~4.3, Lemma~4.4, Proposition~4.5, and Proposition~4.6.

To define \eqref{eq:I}, for $k\geq 0$ and $x\in \X$ we let $\p_k(x)= \{\mu\in \A^*\mid \mu x\in \X, |\mu| = k\}$. For $l\in \NN$, we say that $x, y\in \X$ are {\em $l$-past equivalent} and write $x\sim_l y$ if $\p_k(x) = \p_k(y)$ for all $k\leq l$. Then we say that $\X$ satisfies condition (I) (see \cite[Section~5]{Mats99}) if
\begin{equation}\tag{I}\label{eq:I}
\text{For all }l\in\NN \text{ and for all }x\in\X, \text{ there exists }y\neq x\text{ such that }x\sim_l y.
\end{equation}
An immediate important consequence of \eqref{eq:I} is that if a given $C(F;v)$ is nonempty, then it cannot be a singleton. This consequence features prominently in the proof of Lemma~\ref{lem:lengthprehom}. 
\begin{lem}\label{lem:lengthprehom}
	Let $\X$ be a one-sided subshift over $\A$ which satisfies condition \eqref{eq:I}. Then the map $L: \s_\X^\times\to  \ZZ$ given by $L(s_\alpha E(F; v)s^*_\beta)  = |\beta|-|\alpha| $ is a prehomomorphism.
\end{lem}
\begin{proof}
	We prove here that $L$ is well-defined; once that is established we will have that $L$ is a prehomomorphism from Lemma~\ref{productcomputation} 
	
	Suppose that $\varphi = s_\alpha E(F;v)s_\beta^* = s_\gamma E(G;w)s_\delta^*$ and that $|\delta|-|\gamma| \neq |\beta|-|\gamma|$. Given $x$ in the domain of $\varphi$, we have $x = \beta y = \delta z$ and $\varphi(x) = \alpha y = \gamma z$ for some $y, z\in \X$. Without loss of generality, we can assume $|\beta|\geq|\delta|$.
	
	Since $\beta y = \delta z$ this means that $\beta = \delta\eta$ for some $\eta\in\A^*$, which implies $\eta y = z$. Hence $\alpha y = \gamma\eta y$. Since $|\eta| = |\beta|-|\delta|$ this implies 
	\[
	y = \sigma^{|\gamma|+|\beta| -|\delta|}(\alpha y) = \sigma^{|\alpha|}(\alpha y).
	\]
	By assumption, the two powers on the shift map are unequal, so $\alpha y$ is eventually periodic. There are now two cases based on which power is larger.
	
	\underline{Case 1} $|\alpha| >|\gamma| + |\beta| - |\delta|$: Let $k = |\alpha| -(|\gamma| + |\beta| - |\delta|) \geq 0$. Then $\sigma^k(y) = \sigma^{|\alpha|}(\alpha y) = y$, so that $y$ is periodic with period $k$. But since $\alpha y = \gamma\eta y$ and $k = |\alpha|-|\gamma\eta|$, we must have that $y = \mu^\infty$, where $\mu$ is the suffix of $\alpha$ of length $k$. Thus $x = \beta \mu^\infty$, and the domain of $\varphi$ is the single point $x$. But \eqref{eq:I} implies the domain of $\varphi$ cannot be a singleton, a contradiction.
	
	\underline{Case 2} $|\alpha| <|\gamma| + |\beta| - |\delta|$: in this case taking $ k = |\gamma| + |\beta| - |\delta|- |\alpha|$ again gives that $y$ is periodic with period $k$, and the same reasoning implies $x = \beta\mu^\infty$ where $\mu$ is the suffix of $\beta$ of length $k$. Again this contradicts \eqref{eq:I}. Since both cases end in contradiction, we conclude that $|\beta|-|\alpha| = |\delta|-|\gamma|$ so that $L$ is well-defined.
\end{proof}
\begin{lem}\label{lem:std_form_uniqueness}
	Let $\X$ be a one-sided subshift over $\A$ which satisfies condition \eqref{eq:I}. If $s_\alpha E(F;v)s_\beta^* = s_\gamma E(G;w)s_\delta^*$ with both in standard form, then $\alpha = \gamma$ and $\beta = \delta$. Furthermore, for $s_\alpha E(F;v)s_\beta^*$ and $s_\alpha E(G;w)s_\beta^*$ in standard form, we have $s_\alpha E(F;v)s_\beta^* = s_\alpha E(G;w)s_\beta^*$ if and only if $C(F\cup\{\alpha v, \beta v\};v) = C(G\cup\{\alpha w, \beta w\}; w)$. 
\end{lem}
\begin{proof}
	Suppose that $\varphi = s_\alpha E(F;v)s_\beta^* = s_\gamma E(G;w)s_\delta^*$. Given $x$ in the domain of $\varphi$, we have $x = \beta y = \delta z$ and $\varphi(x) = \alpha y = \gamma z$ for some $y, z\in \X$. Without loss of generality, we can assume $|\beta|<|\delta|$. This implies $\delta = \beta\eta$ and $\gamma = \alpha\mu$ for some $\eta,\mu\in\A^*$ with nonzero length, and Lemma~\ref{lem:lengthprehom} implies $|\mu| = |\eta|$. This implies $\beta\eta z = \beta y$ and $\alpha\mu z = \alpha y$, implying that $\mu z = y = \eta z$. Since $\mu$ and $\eta$ have the same length and are supposed to have different terminal letters by assumption, this is a contradiction. 
	
	For the second part, first assume that $s_\alpha E(F;v)s_\beta^* = s_\alpha E(G;w)s_\beta^*$. Then multiplying by $s_\alpha^*$ on the left and $s_\beta$ on the right gives $E(\{\alpha\}, \epsilon)E(F;v)E(\{\alpha\}, \epsilon) = E(\{\beta\},\epsilon) E(G;w)E(\{\beta\}, \epsilon)$. Applying the calculations in the proof of Lemma~\ref{idempotentlemma} then gives $E(F\cup\{\alpha v, \beta v\};v) = E(G\cup\{\alpha w, \beta w\}; w)$, which is equivalent to $C(F\cup\{\alpha v, \beta v\};v) = C(G\cup\{\alpha w, \beta w\}; w)$. Conversely, $C(F\cup\{\alpha v, \beta v\};v) = C(G\cup\{\alpha w, \beta w\}; w)$ similarly implies $s_\alpha^*s_\alpha E(F;v)s_\beta^*s_\beta = s_\alpha^*s_\alpha E(G;w)s_\beta^*s_\beta$, and multiplying both sides on the left by $s_\alpha$ and the right by $s_\beta^*$ gives $s_\alpha E(F;v)s_\beta^* = s_\alpha E(G;w)s_\beta^*$.
\end{proof}

With Lemma~\ref{lem:std_form_uniqueness} in hand, the proofs for all but one of the results listed above go through as written. The exception is Proposition~\ref{SXOXtight}, which requires a bit more justification to prove that $\pi$ is well-defined. 

\begin{proof}[Proof that $\pi$ is well-defined in Proposition~4.7] If $s_\alpha E(F;v)s_\beta^* = s_\alpha E(G;w)s_\beta^*$ with both in standard form, then Lemma~\ref{lem:std_form_uniqueness} implies that $C(F\cup\{\alpha v, \beta v\};v) = C(G\cup\{\alpha w, \beta w\}; w)$. The universal property of $\OX$ then implies that 
	\[
	S_\alpha^*S_\alpha S_v\left(\prod_{f\in F}S_f^*S_f\right)S_v^*S_\beta^*S_\beta = S_\alpha^*S_\alpha S_w\left(\prod_{g\in G}S_f^*S_f\right)S_w^*S_\beta^*S_\beta,
	\]
	from which we have 
	\begin{align*}
	\pi(s_\alpha E(F;v)s_\beta^*)& = S_\alpha S_v\left(\prod_{f\in F}S_f^*S_f\right)S_v^*S_\beta^*\\
	& = S_\alpha S_\alpha^*S_\alpha S_v\left(\prod_{f\in F}S_f^*S_f\right)S_v^*S_\beta^*S_\beta S_\beta^*\\
	& = S_\alpha S_\alpha^*S_\alpha S_w\left(\prod_{g\in G}S_g^*S_g\right)S_w^*S_\beta^*S_\beta S_\beta^*\\
	& = S_\alpha S_w\left(\prod_{g\in G}S_g^*S_g\right)S_w^*S_\beta^*\\
	& = \pi(s_\alpha E(G;w)s_\beta^*).
	\end{align*}
	Thus $\pi$ is well-defined.
\end{proof}

{\bf Acknowledgment:} I am grateful to Toke Meier Carlsen for pointing out that the results of this paper were not true in the generality stated, and suggesting they would be true assuming \eqref{eq:I}.

\bibliographystyle{alpha}
\bibliography{E:/Dropbox/Research/bibtex}{}

{\small 
\textsc{University of Ottawa, Department of Mathematics and Statistics. 585 King Edward, Ottawa, ON, Canada, K1N 6N5} \texttt{cstar050@uottawa.ca} 
}
\end{document}